\documentclass{article}
\usepackage{graphicx} 

\usepackage{amsmath,amssymb,amsfonts}%
\usepackage{amsthm}%
\usepackage{mathrsfs}%
\usepackage{booktabs}
\usepackage{amsfonts}
\usepackage{graphicx}
\usepackage{epstopdf}
\usepackage{algorithmic}
\usepackage{mathdots}
\usepackage{multirow}
\usepackage[shortlabels]{enumitem}
\usepackage{comment}
\usepackage{rotating}
\usepackage{authblk}

\usepackage[colorlinks,linkcolor=blue,anchorcolor=blue,citecolor=blue]{hyperref}
\usepackage{cleveref}

\theoremstyle{definition}
\newtheorem{definition}{Definition}[section]
\newtheorem{theorem}{Theorem}[section]
\newtheorem{lemma}[theorem]{Lemma}
\newtheorem{proposition}[theorem]{Proposition}
\newtheorem{corollary}[theorem]{Corollary}

\newtheorem{remark}{Remark}
\newtheorem{example}{Example}

\usepackage{caption}
\usepackage{subcaption}


\crefname{hypothesis}{Hypothesis}{Hypotheses}

\newcommand{\mat}[1]{\begin{bmatrix}#1 \\ \end{bmatrix}}

\title{Block $\omega$-circulant preconditioners for parabolic optimal control problems}

\author{Po Yin Fung\thanks{Department of Mathematics, Hong Kong Baptist University, Kowloon Tong, Hong Kong SAR (\href{pyfung@hkbu.edu.hk}{pyfung@hkbu.edu.hk}).}~~and Sean Hon\thanks{Corresponding Author. Department of Mathematics, Hong Kong Baptist University, Hong Kong SAR (\href{seanyshon@hkbu.edu.hk}{seanyshon@hkbu.edu.hk}).} }

\begin{document}

\date{}

\maketitle

\begin{abstract}
In this work, we propose a class of novel preconditioned Krylov subspace methods for solving an optimal control problem of parabolic equations. Namely, we develop a family of block $\omega$-circulant based preconditioners for the all-at-once linear system arising from the concerned optimal control problem, where both first order and second order time discretization methods are considered. The proposed preconditioners can be efficiently diagonalized by fast Fourier transforms in a parallel-in-time fashion, and their effectiveness is theoretically shown in the sense that the eigenvalues of the preconditioned matrix are clustered around $\pm 1$, which leads to rapid convergence when the minimal residual method is used. When the generalized minimal residual method is deployed, the efficacy of the proposed preconditioners are justified in the way that the singular values of the preconditioned matrices are proven clustered around unity. Numerical results are provided to demonstrate the effectiveness of our proposed solvers.
\end{abstract}

\noindent{\bf Keywords:} Toeplitz, skew/circulant matrices, $\omega$-circulant matrices, preconditioners, parallel-in-time

\bigskip

\noindent{\bf AMS Subject Classification:} 65F08, 65F10, 65M22, 15B05



\section{Introduction}\label{sec:Introduction}

In recent years, there has been a growing interest in analyzing and solving optimization problems constrained by parabolic equations. We refer to \cite{lions_1971, Hinze_2008, Troltzsch2010, Borzi_2011} and the references therein for a comprehensive overview.

In this work, we are interested in solving the distributed optimal control model problem. Namely, the following quadratic cost functional is minimized:
\begin{equation}\label{eqn:Cost_functional_heat}
\min_{y,u}~ \mathcal{J}(y,u):=\frac{1}{2}\| y - g \|^{2}_{L^2(\Omega \times (0,T))} + \frac{\gamma}{2}\| u \|^{2}_{L^2(\Omega \times (0,T))},
\end{equation}
subject to a parabolic equation with certain initial and boundary conditions
\begin{equation}\label{eqn:heat}
\left\{
\begin{array}{lc}
 y_{t} - \mathcal{L} y = f + u, \quad (x,t)\in \Omega \times (0,T], \qquad  y = 0, \quad (x,t)\in \partial \Omega \times (0,T], \\
y(x,0)=y_0, \quad x \in \Omega,
\end{array}
	\right.\,
\end{equation}
where $u,g \in L^2$ are the distributed control and the desired tracking trajectory, respectively, $\gamma>0$ is a regularization parameter, $\mathcal{L}=\nabla \cdot (a({x})\nabla )$, and $f$ and $y_0$ are given problem dependent functions. Under appropriate assumptions, theoretical aspects such as the existence, uniqueness, and regularity of the solution were well studied in \cite{lions_1971}. The optimal solution of (\ref{eqn:Cost_functional_heat}) \& (\ref{eqn:heat}) can be characterized by the following system:
\begin{equation}\label{eqn:heat_2}
\left\{
\begin{array}{lc}
 y_{t} - \mathcal{L} y - \frac{1}{\gamma} p= f,\quad (x,t)\in \Omega \times (0,T], \qquad y = 0,\quad  (x,t)\in \partial \Omega \times (0,T], \\
y(x,0)=y_0 \quad  x \in \Omega,\\
-p_{t} - \mathcal{L} p + y = g, \quad (x,t)\in \Omega \times (0,T], \qquad p = 0, \quad (x,t) \in \partial \Omega \times (0,T], \\
p(x,T) = 0 \quad  x \in \Omega,
\end{array}
	\right.\,
\end{equation}
where the control variable $u$ has been eliminated. 

Following \cite{WuZhou2020, WuWangZhou2023}, we discretize (\ref{eqn:heat_2}) using the $\theta$-method for time and some space discretization, which gives
\begin{eqnarray*}
M_{m}\frac{\mathbf{y}_m^{(k+1)} - \mathbf{y}_m^{(k)} }{\tau} +  K_m  (\theta\mathbf{y}_m^{(k+1)} + (1-\theta)\mathbf{y}_m^{(k)})  &=&  M_{m}(\theta\mathbf{f}_m^{(k+1)} + (1-\theta)\mathbf{f}_m^{(k)} + \frac{1}{\gamma} (\theta\mathbf{p}_m^{(k)} + (1-\theta)\mathbf{p}_m^{(k+1)}) ), \\
-M_{m}\frac{\mathbf{p}_m^{(k+1)} - \mathbf{p}_m^{(k)} }{\tau} +  K_m (\theta\mathbf{p}_m^{(k)} + (1-\theta)\mathbf{p}_m^{(k+1)})    &=&  M_{m}( \theta\mathbf{g}_m^{(k)} + (1-\theta)\mathbf{g}_m^{(k+1)} - \theta\mathbf{y}_m^{(k+1)} - (1-\theta)\mathbf{y}_m^{(k)} ).
\end{eqnarray*} The backward Euler method corresponds to $\theta=1$, while the Crank-Nicolson method is adopted when $\theta=1/2$.

Combining the given initial and boundary conditions, one needs to solve the following linear system 
\begin{equation}\label{eqn:main_system_before}
\widetilde{\mathcal{A}} \begin{bmatrix} \mathbf{y}\\ \mathbf{p} \end{bmatrix} = \begin{bmatrix} \mathbf{\widetilde{g}}\\  \mathbf{\widetilde{f}} \end{bmatrix},
\end{equation}
where we have $\mathbf{y} = [ \mathbf{y}_m^{(1)}, \cdots, \mathbf{y}_m^{(n)}]^{\top}$, $\mathbf{p} = [ \mathbf{p}_m^{(0)},\cdots, \mathbf{p}_m^{(n-1)}]^{\top}$, 
\begin{equation*}
    \mathbf{\widetilde{f}}=\mat{M_m (\theta\tau\mathbf{f}_m^{(1)} + (1-\theta)\tau\mathbf{f}_m^{(0)})+(M_{m} - (1-\theta)\tau K_m)\mathbf{y}_m^{(0)}\\
    M_m (\theta\tau\mathbf{f}_m^{(2)} + (1-\theta)\tau\mathbf{f}_m^{(1)})\\
    \vdots\\
    M_m (\theta\tau\mathbf{f}_m^{(n)} + (1-\theta)\tau\mathbf{f}_m^{(n-1)})},
    \mathbf{\widetilde{g}} = \tau\mat{M_m (\theta\mathbf{g}_m^{(0)} + (1-\theta)\mathbf{g}_m^{(1)} - (1-\theta)\mathbf{y}_m^{(0)})\\
    M_m (\theta\mathbf{g}_m^{(1)} + (1-\theta)\mathbf{g}_m^{(2)})\\
    \vdots\\
    M_m (\theta\mathbf{g}_m^{(n-1)} + (1-\theta)\mathbf{g}_m^{(n)})},
\end{equation*}
\begin{eqnarray}\label{eqn:matrix_A_before}
\widetilde{\mathcal{A}} &=&
 \begin{bmatrix} 
\tau B_n^{(2)} \otimes M_{m}  &   (B_n^{(1)})^{\top} \otimes M_{m} + \tau  (B_n^{(2)})^{\top} \otimes K_m \\
 B_n^{(1)} \otimes M_{m} + \tau  B_n^{(2)} \otimes K_m  &  -\frac{\tau}{\gamma} (B_n^{(2)})^{\top} \otimes M_{m}
\end{bmatrix},
\end{eqnarray}
and the matrices $B_n^{(1)}, B_n^{(2)}$ are, respectively,
\begin{equation*}
B_n^{(1)} = \begin{bmatrix}
1 &   &  &  & \\
-1  & 1    & & & \\
   &  -1  & 1  & &  \\
  &     & \ddots & \ddots &  \\
 &  &    & -1 & 1
\end{bmatrix}, \quad
B_n^{(2)} = \mat{
\theta& & & & \\
1-\theta&\theta& & & \\
 &1-\theta&\theta& & \\
 & &\ddots&\ddots& \\
 & & &1-\theta&\theta}.
\end{equation*}

We assume that the matrix $M_{m}$ is symmetric positive definite, and the matrix $K_{m}$ is symmetric positive semi-definite. The matrices $M_{m}$ and $K_{m}$ represent the mass matrix and the stiffness matrix, respectively, if a finite element method is employed. For the finite difference method, the linear system is such that $M_{m}=I_{m}$ and $K_{m}=-L_{m}$, where $-L_{m}$ is the discretization matrix of the negative Laplacian.

Following a similar idea in \cite{LevequePearson22}, we can further transform (\ref{eqn:matrix_A_before}) into the following equivalent system
\begin{equation}\label{eqn:main_system}
{\mathcal{A}}\begin{bmatrix} \sqrt{\gamma}\widetilde{\mathbf{y}}\\ \widetilde{\mathbf{p}} \end{bmatrix} = \begin{bmatrix} \mathbf{g}\\ \sqrt{\gamma} \mathbf{f} \end{bmatrix},
\end{equation}
where $\mathbf{f} =(I_{n} \otimes M_{m}^{-\frac{1}{2}}) \mathbf{\widetilde{f}}$, $\mathbf{g} =(I_{n} \otimes M_{m}^{-\frac{1}{2}}) \mathbf{\widetilde{g}}$, $\widetilde{\mathbf{y}} = (B_n^{(2)}\otimes M_{m}^{\frac{1}{2}}) [ \mathbf{y}_m^{(1)}, \cdots, \mathbf{y}_m^{(n)}]^{\top}$, $\mathbf{p} = ((B_n^{(2)})^{\top}\otimes M_{m}^{\frac{1}{2}}) [ \mathbf{p}_m^{(0)},\cdots, \mathbf{p}_m^{(n-1)}]^{\top}$, and
\begin{eqnarray}\label{eqn:matrix_A}
{\mathcal{A}} &=&
 \begin{bmatrix} 
\alpha {I}_n \otimes I_m  & B_n^{\top}\otimes I_m + \tau  {I}_n \otimes M_{m}^{-\frac{1}{2}} K_{m} M_{m}^{-\frac{1}{2}}\\
 B_n\otimes I_m + \tau  {I}_n\otimes M_{m}^{-\frac{1}{2}} K_{m} M_{m}^{-\frac{1}{2}}  &  -\alpha {I}_n \otimes I_m 
\end{bmatrix}\\\nonumber
&=&
\begin{bmatrix} 
\alpha {I}_n \otimes I_m  & \mathcal{T}^{\top}\\
 \mathcal{T}  & -\alpha {I}_n \otimes I_m 
\end{bmatrix}.
\end{eqnarray}

Note that $\frac{1}{2} \leq \theta \leq 1$, {$\alpha = \frac{\tau}{\sqrt{\gamma}}$}, $I_m$ is the $m \times m$ identity matrix, and
\begin{equation}\label{eqn:matrix_T}
\mathcal{T}=B_n\otimes I_m + \tau  {I}_n\otimes M_{m}^{-\frac{1}{2}} K_{m} M_{m}^{-\frac{1}{2}},
\end{equation}
where $B_n$ is a lower triangular Toeplitz matrix whose entries are known explicitly as
\begin{equation*} 
B_n = \begin{bmatrix}
\frac{1}{\theta} &   &  &  & \\
\frac{-1}{\theta^2}  & \frac{1}{\theta}   & & & \\
\frac{-(\theta-1)}{\theta^3}  & \frac{-1}{\theta^2}   & \frac{1}{\theta}  & &  \\
\vdots  & \ddots    & \ddots & \ddots &  \\
\frac{-(\theta-1)^{n-2}}{\theta^n} & \cdots & \frac{-(\theta-1)}{\theta^3}    & \frac{-1}{\theta^2}  & \frac{1}{\theta} 
\end{bmatrix}.
\end{equation*}
Incidentally, $B_n$ can be expressed as the product of two Toeplitz matrices, i.e., $B_n=B_n^{(1)}(B_n^{(2)})^{-1}=(B_n^{(2)})^{-1}B_n^{(1)}$.
As will be explained in Section \ref{sec:prelim}, the Toeplitz matrices $B_n^{(1)}$ and $B_n^{(2)}$ are respectively generated by the functions 
\begin{equation}\label{eqn:functionB1}
 f_1( \phi ) = 1 - \exp{(\mathbf{i}\phi})
\end{equation}
and
\begin{equation}\label{eqn:functionB2}
f_2( \phi ) = \theta + (1-\theta) \exp{(\mathbf{i}\phi)}.
\end{equation}

In what follows, we focus on using the finite difference method to discretize the system (\ref{eqn:heat_2}), namely, $M_{m}=I_{m}$ and $K_{m}=-L_{m}$ in the linear system (\ref{eqn:main_system}). However, we point out that our proposed preconditioning methods with minimal modification are still applicable when a finite element method is used. We first develop a preconditioned generalized minimal residual (GMRES) method for a nonsymmetric equivalent system of (\ref{eqn:main_system}), which is 
\begin{equation}\label{eqn:main_system_nonsym}
\mathcal{\widehat{A}}\begin{bmatrix}  \sqrt{\gamma} \widetilde{\mathbf{y}} \\ \widetilde{\mathbf{p}} \end{bmatrix} = \begin{bmatrix}  \sqrt{\gamma} \mathbf{f} \\ \mathbf{g} \end{bmatrix}
\end{equation}
where 
\begin{eqnarray}\label{eqn:matrix_A_hat}
\mathcal{\widehat{A}}&=&
\begin{bmatrix} 
 \mathcal{T}  & -\alpha {I}_n \otimes I_m \\
\alpha {I}_n \otimes I_m  & \mathcal{T}^{\top}
\end{bmatrix}.
\end{eqnarray} For $\mathcal{\widehat{A}}$, we propose the following novel block preconditioner:
\begin{eqnarray}\label{eqn:matrix_P_GMRES}
\mathcal{P}_{S} = 
\begin{bmatrix} 
 \mathcal{S}  & -\alpha {I}_n \otimes I_m \\
\alpha {I}_n \otimes I_m  & \mathcal{S}^* 
\end{bmatrix},
\end{eqnarray}
where 
\begin{equation}\label{eqn:matrix_S}
\mathcal{S}=S_n\otimes I_m + \tau  I_n\otimes (-L_m).
\end{equation}
Notice that $S_n:=S_n^{(1)}(S_n^{(2)})^{-1}$, where
\begin{eqnarray*}\label{eqn:matrix_strang_1}
S_n^{(1)} = \begin{bmatrix}
1 &   &  &   & -\omega\\
-1  & 1    & & &  \\
   &  -1  & 1  & &  \\
  &     & \ddots & \ddots &  \\
 &  &    & -1 & 1
 \end{bmatrix}, \quad
S_n^{(2)} = \begin{bmatrix}
\theta &   &  &   & \omega(1-\theta)\\
1-\theta  & \theta    & & &  \\
   &  1-\theta  & \theta  & &  \\
  &     & \ddots & \ddots &  \\
 &  &    & 1-\theta & \theta
 \end{bmatrix}
\end{eqnarray*}
and $\omega = e^{\textbf{i}\zeta} \in \mathbb{C}$ with $\zeta \in [0,2\pi)$. Clearly, both $S_n^{(1)}$ and $S_n^{(2)}$ are $\omega$-circulant matrices \cite{BiniLatoucheMeini,Bertaccini_Ng_2003}, so they admit the eigendecompositions 
\begin{equation}\label{eqn:decompositiob_S12}
	S_n^{(k)} = (\Gamma_n\mathbb{F}_{n}) \Lambda_{n}^{(j)} (\Gamma_n\mathbb{F}_{n})^{*} , \quad j=1,2,
\end{equation}
where $\Gamma_n={\rm diag} ( \exp{(  \textbf{i}\zeta{(\frac{k-1}{n}})   )}   )_{k=1}^{n}$ and $\mathbb{F}_n=\frac{1}{\sqrt{n}}[\theta_n^{(i-1)(j-1)}]_{i,j=1}^{n}$ with $\theta_n =\exp(\frac{2\pi {\bf i}}{n})$ and
	$\Lambda_{n}^{(j)}={\rm diag} (f_1 ( \frac{\zeta + 2\pi k}{n}  ) )_{k=0}^{n-1}$ \textrm{and} $f_j$ is defined by \eqref{eqn:functionB1} and \eqref{eqn:functionB2}.

\begin{remark}
Since $S_n^{(2)}$ is an $\omega$-circulant matrix, its eigenvalues $\lambda_k(S_n^{(2)})$ can be found explicitly, i.e., $\lambda_k(S_n^{(2)}) = \theta + (1-\theta)\exp{\big(\mathbf{i}(\frac{\zeta + 2\pi k}{n})\big)}, k = 0,\dots, n-1 $. It is known that $S_n^{(2)}$ can be singular. For example, $S_n^{(2)}$ has a zero eigenvalue for even $n$ when $\theta=\frac{1}{2}$ and $\zeta=0$ (i.e., $\omega=1$), which was discussed in \cite{WuZhou2020}. When it happens, a remedy is to replace the zero eigenvalue by a nonzero real number. Thus, it can easily create a nonsingular circulant matrix $\widetilde{S}_n^{(2)}$ such that $\mathrm{rank}(\widetilde{S}_n^{(2)}-{S}_n^{(2)})=1$. Hence, our preconditioning approach can work with $S_n$ replaced by $\widetilde{S}_n=S_n^{(1)}(\widetilde{S}_n^{(2)})^{-1}$, without the restrictive assumption needed in \cite{WuZhou2020} (i.e., $n$ should be chosen odd). Therefore, for ease of exposition, we assume that $S_n^{(2)}$ is nonsingular in the rest of this work.
\end{remark}

\begin{remark}
    It should be noted that our adopted $\omega$-circulant preconditioning is distinct from the $\epsilon$-circulant preconditioning that has received much attention in the literature due to its excellent performance for solving PDE problems (see, e.g, \cite{doi:10.1137/20M1316354, doi:10.1137/19M1309869, Sun2022, 2020arXiv200509158G}), despite these two kind of matrices have a similar decomposition like \eqref{eqn:decompositiob_S12}. A notable difference between the two is that $\omega$ is a complex number in general, while $\epsilon$ is only chosen real. Correspondingly, the diagonal matrix $\Gamma_n$ for $\omega$-circulant matrices is unitary in general, while that for $\epsilon$-circulant matrices is not.
\end{remark}

 When $\omega=1$, $\mathcal{P}_{S}$ becomes the block circulant based preconditioner proposed in \cite{WuZhou2020}. The existing block skew-circulant based preconditioner \cite{Bouillon2021} proposed only with the backward Euler method is also included in our preconditioning strategy when $\omega=-1$ and $\theta=1$. As extensively studied in \cite{PottsSteidl1991,PottsSteidl2001}, $\omega$-circulant matrices as preconditioners for Toeplitz systems can substantially outperform the Strang type preconditioners \cite{Strang1986} (i.e., when $\omega=1$), especially in the ill-conditioned case. For related studies on the unsatisfactory performance of Strang preconditioners for ill-conditioned nonsymmetric (block) Toeplitz systems, we refer to \cite{Hon_SC_Wathen,HonFungDongSC_2023}.

 Since $\omega$-circulant matrices can be efficiently diagonalized by the fast Fourier transforms (FFTs), which can be parallelizable over different possessors. Hence, our preconditioner $\mathcal{P}_S$ is especially advantageous in a high performance computing environment.

In order to support our GMRES solver with $\mathcal{P}_S$ as a preconditioner, we will show that the singular values of $\mathcal{P}_S^{-1} \mathcal{\widehat{A}}$ are clustered around unity. However, despite of its success which can be seen in the numerical experiments from Section \ref{sec:numerical}, the convergence study of preconditioning strategies for nonsymmetric problems is to a great extent heuristic. As mentioned in \cite[Chapter 6]{ANU:9672992}, descriptive convergence bounds are usually not available for GMRES or any of the other applicable nonsymmetric Krylov subspace iterative methods. 

Therefore, as an alternative solver, we develop a preconditioned minimal residual (MINRES) method for the symmetric system (\ref{eqn:main_system}), instead of \eqref{eqn:main_system_nonsym}. Notice that our proposed MINRES method is in contrast with the aforementioned GMRES solvers, such as \cite{WuZhou2020,Bouillon2021} where a block (skew-)circulant type preconditioner was proposed and the eigenvalues of the preconditioned matrix were shown clustered around unity. As well explained in \cite{Greenbaum_1996}, the convergence behaviour of GMRES cannot be rigorously analyzed by using only eigenvalues in general. Thus, our MINRES solver can get round these theoretical difficulties of GMRES. 

Based on the spectral distribution of $\mathcal{A}$, we first propose the following novel SPD block diagonal preconditioner as an ideal preconditioner for $\mathcal{A}$:
\begin{eqnarray}\label{eqn:abs_ideal_matrix_H}
|\mathcal{A}|:=\sqrt{\mathcal{A}^2}=\begin{bmatrix} 
 \sqrt{ \mathcal{T}^{\top}\mathcal{T} + \alpha^2 {I}_n \otimes I_m } & \\
   & \sqrt{ \mathcal{T} \mathcal{T}^{\top}+ \alpha^2 {I}_n \otimes I_m}
\end{bmatrix}.
\end{eqnarray}

Despite its excellent preconditioning effect for $\mathcal{A}$, which will be shown in Section \ref{sub:ideal_preconditioner}, the matrix $|\mathcal{A}|$ is computational expensive to invert. Thus, we then propose the following parallel-in-time (PinT) preconditioner, which mimics $|\mathcal{A}|$ and can be fast implemented:
\begin{eqnarray}\label{eqn:matrix_P}
|\mathcal{P}_{S}| :=\sqrt{\mathcal{P}_{S}^* \mathcal{P}_{S}} = \begin{bmatrix} 
\sqrt{\mathcal{S}^* \mathcal{S} +  \alpha^2 {I}_n \otimes I_m} 
&  \\
  & \sqrt{\mathcal{S} \mathcal{S}^* +  \alpha^2 {I}_n \otimes I_m} 
\end{bmatrix}.
\end{eqnarray}

However, the preconditioner $|\mathcal{P}_S|$ requires fast diagonalizability of $L_m$ in order to be efficiently implemented. When such diagonalizability is not available, we further propose the following preconditioner $\mathcal{P}_{MS}$ as a modification of $|\mathcal{P}_S|$:
\begin{eqnarray}\label{eqn:matrix_P_MS}
&&\mathcal{P}_{MS} \\ \nonumber
&=& \begin{bmatrix} 
\sqrt{S_{n}^* S_n +  \alpha^2 I_{n}} \otimes I_{m} + \tau  I_n\otimes (-L_m)
&  \\
  & \sqrt{S_n S_{n}^* +  \alpha^2 I_{n}} \otimes I_{m} + \tau I_{n} \otimes (-L_m)
\end{bmatrix}.
\end{eqnarray} One of our main contributions in this work is to develop a preconditioned MINRES method with the proposed preconditioners, which has theoretically guaranteed convergence based on eigenvalues.

It is worth noting that our preconditioning approaches are fundamentally different from another kind of related existing work (see, e.g., \cite{pearson2012regularization,Linheatopt2022}), which is a typical preconditioning approach in the context of preconditioning for saddle point systems. Its effectiveness is based on the approximation of Schur complements (e.g., \cite{PearsonWathen2012,LevequePearson22}) and the classical preconditioning techniques \cite{AxelssonNeytcheva2006, MurphyGolubWathen2000}. Yet, for instance, our preconditioning proposal extends the MINRES preconditioning strategy proposed in \cite{hondongSC2023} from optimal control of wave equations to that of parabolic equations, resulting in a clustered spectrum around $\{\pm 1\}$. Moreover, the implementation of our preconditioners based on FFTs are parallel-in-time. 

The paper is organized as follows. In Section \ref{sec:prelim}, we review some preliminary results on block Toeplitz matrices. In Section \ref{sec:main}, we provide our main results on the spectral analysis for our proposed preconditioners. Numerical examples are given in Section \ref{sec:numerical} for supporting the performance of our proposed preconditioners.

\section{Preliminaries on Toeplitz matrices}\label{sec:prelim}

In this section, we provide some useful background knowledge regarding Toeplitz matrices.

We let $L^1([-\pi,\pi])$ be the Banach space of all functions that are Lebesgue integrable over $[-\pi,\pi]$ and periodically extended to the whole real line. The Toeplitz matrix generated by $f \in L^1([-\pi,\pi])$ is denoted by $T_{n}[f]$, namely, 

	\begin{eqnarray*}
	T_{n}[f]=\begin{bmatrix}{}
	a_0 & a_{-1} & \cdots & a_{-n+2} & a_{-n+1} \\
	a_1 & a_0 & a_{-1}   &  & a_{-n+2} \\
	\vdots & a_1 & a_0 & \ddots & \vdots \\
	a_{n-2} &  & \ddots & \ddots & a_{-1} \\
	a_{n-1} & a_{n-2} &\cdots & a_1 & a_0
	\end{bmatrix},
	\end{eqnarray*}
    where
	\begin{eqnarray*}
	a_{k}=\frac{1}{2\pi} \int_{-\pi} ^{\pi}f(\theta) e^{-\mathbf{i} k \theta } \,d\theta,\quad k=0,\pm1,\pm2,\dots
	\end{eqnarray*}
	are the Fourier coefficients of $f$. The function $f$ is called the \emph{generating function} of $T_n[f]$. If $f$ is complex-valued, then $T_n[f]$ is non-Hermitian for all sufficiently large $n$. Conversely, if $f$ is real-valued, then $T_n[f]$ is Hermitian for all $n$. If $f$ is real-valued and nonnegative, but not identically zero almost everywhere, then $T_n[f]$ is Hermitian positive definite for all $n$. If $f$ is real-valued and even, $T_n[f]$ is symmetric for all $n$. For thorough discussions on the related properties of block Toeplitz matrices, we refer readers to \cite{book-GLT-I} and  references therein; for computational features see \cite{MR2108963,Chan:1996:CGM:240441.240445,book-GLT-II} and references there reported.

\section{Main results}\label{sec:main}

In this section, the main results which support the effectiveness of our proposed preconditioners are provided. Also, the implementation issue is discussed.

\subsection{GMRES - block $\omega$-circulant based preconditioner}\label{sub:strang_preconditioner_gmres}

\begin{proposition}\label{thm:strang_preconditioner_gmres}
Let $\mathcal{\widehat{A}} \in \mathbb{R}^{2mn \times 2mn} ,\mathcal{P}_{S} \in \mathbb{C}^{2mn \times 2mn}$ be defined by (\ref{eqn:matrix_A_hat}) and (\ref{eqn:matrix_P_GMRES}), respectively. Then,
\[
\mathcal{P}_{S}^{-1} \mathcal{\widehat{A}} = I_{nm} + \widetilde{\mathcal{R}}_{1},
\]
where $I_{mn}$ is the $mn$ by $mn$ identity matrix and $\mathrm{rank}(\widetilde{\mathcal{R}}_{1}) \leq 4m$.
\end{proposition}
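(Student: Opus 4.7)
The natural starting point is the algebraic identity
\[
\mathcal{P}_{S}^{-1}\widehat{\mathcal{A}} \;=\; I_{2mn} + \mathcal{P}_{S}^{-1}\bigl(\widehat{\mathcal{A}} - \mathcal{P}_{S}\bigr),
\]
so the plan reduces to setting $\widetilde{\mathcal{R}}_{1} := \mathcal{P}_{S}^{-1}(\widehat{\mathcal{A}} - \mathcal{P}_{S})$ and bounding its rank, since multiplying a matrix on the left by the invertible factor $\mathcal{P}_{S}^{-1}$ does not change the rank.

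First I would compute the difference $\widehat{\mathcal{A}} - \mathcal{P}_{S}$ blockwise. The off-diagonal $\pm\alpha I_{n}\otimes I_{m}$ blocks cancel, leaving the block-diagonal matrix with entries $\mathcal{T} - \mathcal{S}$ and $\mathcal{T}^{\top} - \mathcal{S}^{*}$. By the definitions of $\mathcal{T}$ in \eqref{eqn:matrix_T} and $\mathcal{S}$ in \eqref{eqn:matrix_S} (together with the finite-difference assumption $M_m = I_m$, $K_m = -L_m$), the diffusion terms $\tau I_{n}\otimes(-L_m)$ cancel and we are left with
\[
\mathcal{T} - \mathcal{S} \;=\; (B_n - S_n) \otimes I_m, \qquad \mathcal{T}^{\top} - \mathcal{S}^{*} \;=\; (B_n - S_n)^{\top}\otimes I_m,
\]
using that $L_m$ is real symmetric. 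Therefore it suffices to show that $\mathrm{rank}(B_n - S_n) \le 2$; the Kronecker product with $I_m$ then multiplies the rank by $m$, and summing the two diagonal blocks yields the desired bound $4m$.

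The main step is the rank estimate for $B_n - S_n$. Here I would exploit the factorizations $B_n = B_n^{(1)}(B_n^{(2)})^{-1}$ and $S_n = S_n^{(1)}(S_n^{(2)})^{-1}$ provided in the excerpt, together with the obvious observation that $S_n^{(1)} - B_n^{(1)}$ and $S_n^{(2)} - B_n^{(2)}$ are each rank-one matrices supported in the top-right corner (entries $-\omega$ and $\omega(1-\theta)$ respectively). Using the telescoping decomposition
\[
B_n - S_n \;=\; \bigl(B_n^{(1)} - S_n^{(1)}\bigr)(S_n^{(2)})^{-1} \;+\; B_n^{(1)}\bigl((B_n^{(2)})^{-1} - (S_n^{(2)})^{-1}\bigr)
\]
and the resolvent identity
\[
(B_n^{(2)})^{-1} - (S_n^{(2)})^{-1} \;=\; (B_n^{(2)})^{-1}\bigl(S_n^{(2)} - B_n^{(2)}\bigr)(S_n^{(2)})^{-1},
\]
each of the two summands is a product of a rank-one matrix with nonsingular factors, hence has rank at most one. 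Adding these two contributions gives $\mathrm{rank}(B_n - S_n) \le 2$, which closes the argument.

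I do not expect any serious obstacles: the only subtlety is the invertibility of $S_n^{(2)}$, which is already disposed of by the remark following \eqref{eqn:decompositiob_S12}, and ensuring that the cancellation of the diffusion term $\tau I_n\otimes(-L_m)$ between $\mathcal{T}$ and $\mathcal{S}$ is clean in both blocks (i.e.\ that $(-L_m)^{\top} = -L_m$, which holds by the symmetry assumption on $K_m$). Everything else is linear algebra bookkeeping, and the identity $\mathrm{rank}(A\otimes B) = \mathrm{rank}(A)\,\mathrm{rank}(B)$ together with subadditivity of rank across a block-diagonal matrix finishes the proof.
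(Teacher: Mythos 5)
Your proposal is correct and mirrors the paper's proof: both reduce to bounding $\mathrm{rank}(B_n - S_n)\le 2$ via the same telescoping decomposition through $B_n^{(1)}(B_n^{(2)})^{-1}$ and $S_n^{(1)}(S_n^{(2)})^{-1}$, with you merely making explicit (via the resolvent identity) the rank-one bound on $(B_n^{(2)})^{-1}-(S_n^{(2)})^{-1}$ that the paper states in passing. One cosmetic slip: since $S_n$ is complex, the second diagonal block is $(B_n^{\top}-S_n^{*})\otimes I_m = (B_n - S_n)^{*}\otimes I_m$ rather than $(B_n - S_n)^{\top}\otimes I_m$, but this does not affect the rank bound.
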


\begin{proof}
First, we observe that
    \begin{align*}
        \mathcal{P}_{S} - \mathcal{\widehat{A}} &= \mat{ \mathcal{S-T}& \\ &(\mathcal{S-T})^{*} } \\
        &= \mat{ (S_{n}-B_{n})\otimes I_{m} \\ &(S_{n}-B_{n})^{*}\otimes I_{m} }.
    \end{align*}
    
    Now, we examine $\mathrm{rank}(S_{n}-B_{n})$ via the following matrix decomposition:
\begin{eqnarray*}
    S_{n}-B_{n} &=& S_n^{(1)}(S_n^{(2)})^{-1} - B_n^{(1)}(B_n^{(2)})^{-1} \\
    &=& (S_n^{(1)} - B_n^{(1)}) (S_n^{(2)})^{-1} + B_n^{(1)}\big( (S_n^{(2)})^{-1} - (B_n^{(2)})^{-1} \big).
\end{eqnarray*}
From the simple structure of these matrices, it is clear that $\mathrm{rank}(S_n^{(1)} - B_n^{(1)}) \leq 1$ and $\mathrm{rank}\big( (S_n^{(2)})^{-1} - (B_n^{(2)})^{-1} \big) \leq 1$ (because $\mathrm{rank} (S_n^{(2)} - B_n^{(2)}) \leq 1$). Thus, we have $\mathrm{rank}( S_{n}-B_{n} ) \leq 2$, implying $\mathrm{rank}( \mathcal{P}_{S} - \mathcal{\widehat{A}}) \leq 4m$.
    Then, we have
    \begin{align*}
        \mathcal{P}_{S}^{-1} \mathcal{\widehat{A}} &= I_{nm} - \underbrace{\mathcal{P}_{S}^{-1} ( \mathcal{P}_{S} - \mathcal{\widehat{A}}  ) }_{=:\widetilde{\mathcal{R}}_{1}},
    \end{align*}
    where $\mathrm{rank}(\widetilde{\mathcal{R}}_{1}) \leq 4m$. 
\end{proof}


As a consequence of proposition \ref{thm:strang_preconditioner_gmres}, we can show that the singular values of $\mathcal{P}_{S}^{-1} \mathcal{\widehat{A}}$ are clustered around unity except for a number of outliers whose size is independent of $n$ in general. From a preconditioning for nonsymmetric Toeplitz systems point of view, such a singular value cluster is often used to support the preconditioning effectiveness of $\mathcal{P}_{S}$ for $\mathcal{\widehat{A}}$ when GMRES is used. We refer to \cite{MR2108963} for a systematic exposition of preconditioning for non-Hermitian Toeplitz systems. One could further show that the eigenvalues of $\mathcal{P}_{S}^{-1}\mathcal{\widehat{A}}$ are also clustered around unity, as with many existing works. However, as mentioned in Section \ref{sec:Introduction}, the convergence of GMRES in general cannot be rigorously analyzed by using only eigenvalues. As such, in the next subsections, we provide theoretical supports for our proposed MINRES solvers.

\subsection{MINRES - ideal preconditioner}\label{sub:ideal_preconditioner}

In what follows, we will show that an ideal preconditioner for $\mathcal{A}$ is the SPD matrix $|\mathcal{A}|$ defined by (\ref{eqn:abs_ideal_matrix_H}).

\begin{proposition}\label{proposition:ideal_precon}
Let $\mathcal{A} \in \mathbb{R}^{2mn \times 2mn}$ be defined by (\ref{eqn:matrix_A}). Then, the preconditioned matrix $|\mathcal{A}|^{-1}\mathcal{A}$ is both (real) symmetric and orthogonal.
\end{proposition}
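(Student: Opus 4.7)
The plan is to leverage two facts: $\mathcal{A}$ is real symmetric (inspect the block structure of (\ref{eqn:matrix_A})), and $\mathcal{A}^2$ is therefore SPD (hence invertible, so $\mathcal{A}$ itself is invertible). Consequently $|\mathcal{A}|=(\mathcal{A}^2)^{1/2}$ is well defined, SPD, and -- being a function of $\mathcal{A}^2$ -- commutes with $\mathcal{A}$. These observations essentially reduce the proposition to two short matrix manipulations.

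First, I would verify symmetry. Since $\mathcal{A}^\top=\mathcal{A}$ and $|\mathcal{A}|^\top=|\mathcal{A}|$, we have
\[
(|\mathcal{A}|^{-1}\mathcal{A})^\top = \mathcal{A}\,|\mathcal{A}|^{-1}.
\]
By the commutation observation above, $|\mathcal{A}|^{-1}\mathcal{A}=\mathcal{A}\,|\mathcal{A}|^{-1}$, which yields symmetry. To make the commutation step transparent in the write-up, I would note the explicit block form
\[
\mathcal{A}^2=\begin{bmatrix}\mathcal{T}^\top\mathcal{T}+\alpha^2 I_{nm}& 0\\ 0 & \mathcal{T}\mathcal{T}^\top+\alpha^2 I_{nm}\end{bmatrix},
\]
so that $|\mathcal{A}|$ is precisely the SPD block-diagonal matrix in (\ref{eqn:abs_ideal_matrix_H}), and standard functional-calculus reasoning (or an eigendecomposition of the symmetric $\mathcal{A}$) gives $\mathcal{A}|\mathcal{A}|=|\mathcal{A}|\mathcal{A}$.

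Second, for orthogonality I would simply compute
\[
(|\mathcal{A}|^{-1}\mathcal{A})^\top (|\mathcal{A}|^{-1}\mathcal{A})
= \mathcal{A}\,|\mathcal{A}|^{-2}\,\mathcal{A}
= \mathcal{A}(\mathcal{A}^2)^{-1}\mathcal{A}
= I_{2mn},
\]
using $|\mathcal{A}|^2=\mathcal{A}^2$. Combined with the symmetry already shown, this gives $(|\mathcal{A}|^{-1}\mathcal{A})^2=I_{2mn}$ as well, so the spectrum of $|\mathcal{A}|^{-1}\mathcal{A}$ lies in $\{\pm 1\}$, a fact that motivates the use of MINRES and ties directly to the introductory discussion.

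There is no real obstacle here; the only subtle point to state carefully is the commutation $\mathcal{A}|\mathcal{A}|=|\mathcal{A}|\mathcal{A}$, which I would justify briefly by noting that $|\mathcal{A}|$ is a polynomial limit in $\mathcal{A}^2$ (or, equivalently, by spectral decomposition of the real symmetric matrix $\mathcal{A}$). Everything else is a one-line manipulation.
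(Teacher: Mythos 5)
Your argument is correct, but it takes a genuinely different route from the paper. The paper proves the proposition constructively: it starts from the SVD $\mathcal{T}=U\Sigma V^\top$, builds an explicit orthogonal matrix $\widehat{\mathcal{Q}}$ (with blocks involving $\Sigma$, $\alpha$, and normalizing factors $D_1,D_2$), and thereby obtains a concrete orthogonal eigendecomposition $\mathcal{A}=\mathcal{Q}\,\mathrm{diag}\bigl(\sqrt{\Sigma^2+\alpha^2 I},\,-\sqrt{\Sigma^2+\alpha^2 I}\bigr)\mathcal{Q}^\top$, from which $|\mathcal{A}|^{-1}\mathcal{A}=\mathcal{Q}\,\mathrm{diag}(I,-I)\,\mathcal{Q}^\top$ follows immediately. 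You instead argue abstractly: $\mathcal{A}$ is real symmetric and invertible, $|\mathcal{A}|=(\mathcal{A}^2)^{1/2}$ commutes with $\mathcal{A}$ by functional calculus on the eigendecomposition of $\mathcal{A}$, and then symmetry and orthogonality of $|\mathcal{A}|^{-1}\mathcal{A}$ follow from two one-line computations. Both proofs are sound. Your version is shorter, cleaner, and more general (it only uses symmetry and invertibility of $\mathcal{A}$), but note what the paper's longer construction buys: the explicit $\mathcal{Q}$ built from $U,V,\Sigma$ is used right after the proposition to motivate the preconditioner designs $|\mathcal{P}_S|$ and $\mathcal{P}_{MS}$ via the spectral symbol, so the extra explicitness is not wasted. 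One small point to state carefully in your write-up is that $|\mathcal{A}|$ in (\ref{eqn:abs_ideal_matrix_H}) really is the unique SPD square root $(\mathcal{A}^2)^{1/2}$: since $\mathcal{A}^2$ is block diagonal with SPD blocks $\mathcal{T}^\top\mathcal{T}+\alpha^2 I$ and $\mathcal{T}\mathcal{T}^\top+\alpha^2 I$, the block-diagonal matrix of their SPD square roots is SPD and squares to $\mathcal{A}^2$, so by uniqueness it coincides with $(\mathcal{A}^2)^{1/2}$, and your functional-calculus commutation applies to it.
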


\begin{proof}
Considering the singular value decomposition of $\mathcal{T} = U \Sigma V^{\top}$, the associated decomposition of $\mathcal{A}$ is obtained by direct computation, that is
\begin{eqnarray}\label{eqn:matrix_Mat}
\mathcal{A} &=&
\begin{bmatrix} 
\alpha {I}_n \otimes I_m  & V\Sigma U^{\top}\\
 U\Sigma V^{\top}  &  -\alpha{I}_n \otimes I_m
\end{bmatrix} \\\nonumber
&=&
\begin{bmatrix} 
V  & \\
   &  U
\end{bmatrix}
\begin{bmatrix} 
\alpha {I}_n \otimes I_m  & \Sigma\\
 \Sigma  &  -\alpha {I}_n \otimes I_m
\end{bmatrix}
\begin{bmatrix} 
V  & \\
   &  U
\end{bmatrix}^{\top}  \\\nonumber
&=&\begin{bmatrix} 
V  & \\
   &  U
\end{bmatrix}
\mathcal{{\widehat{Q}}}
\begin{bmatrix} 
\sqrt{\Sigma^2+\alpha^2 {I}_n \otimes I_m}  & \\
   &  -\sqrt{\Sigma^2+\alpha^2 {I}_n \otimes I_m}
\end{bmatrix}
\mathcal{{\widehat{Q}}}^{\top}
\begin{bmatrix} 
V  & \\
   &  U
\end{bmatrix}^{\top},
\end{eqnarray} 
where $\mathcal{{\widehat{Q}}}$ is orthogonal given by 
\begin{eqnarray*}
\mathcal{{\widehat{Q}}}
=
\begin{bmatrix} 
 \Sigma D_1^{-1}& -\Sigma D_2^{-1}\\
 \big(\sqrt{\Sigma^2+\alpha^2 {I}_n \otimes I_m}  - \alpha  {I}_n \otimes I_m\big)D_1^{-1}   & \big(\sqrt{\Sigma^2+\alpha^2 {I}_n \otimes I_m}  + \alpha {I}_n \otimes I_m\big)D_2^{-1}
\end{bmatrix}
\end{eqnarray*}
with
\[
D_1 =  \sqrt{(-\sqrt{\Sigma^2+\alpha^2 {I}_n \otimes I_m}  + \alpha {I}_n \otimes I_m)^2 + \Sigma^2} 
\]
and
\[
D_2 = \sqrt{(\sqrt{\Sigma^2+\alpha^2 {I}_n \otimes I_m}  + \alpha {I}_n \otimes I_m)^2 + \Sigma^2 }.
\]
It is obvious that both diagonal matrices $D_1$ and $D_2$ are invertible. Thus $\widehat{Q}$ is well-defined.

Since both $\widehat{Q}$ and $\begin{bmatrix} 
V  & \\
   &  U
\end{bmatrix}$ are orthogonal, $\mathcal{Q}:=\begin{bmatrix} 
V  & \\
   &  U
\end{bmatrix}\widehat{Q}$ is also orthogonal. Hence, from (\ref{eqn:matrix_Mat}), we have obtained an eigendecomposition of $\mathcal{A}$, i.e., 
\[
\mathcal{A}=\mathcal{Q}\begin{bmatrix} 
\sqrt{\Sigma^2+\alpha^2 {I}_n \otimes I_m}  & \\
   &  -\sqrt{\Sigma^2+\alpha^2 {I}_n \otimes I_m }
\end{bmatrix}
\mathcal{Q}^{\top}.
\]

Thus, we have
\begin{eqnarray*}
|\mathcal{A}|&=&\mathcal{Q} 
\begin{bmatrix} 
\sqrt{\Sigma^2 + \alpha^2 {I}_n \otimes I_m}  & \\
   &  \sqrt{\Sigma^2 + \alpha^2 {I}_n \otimes I_m }
\end{bmatrix}
  \mathcal{Q}^{\top} ,
  \end{eqnarray*}
  where $\mathcal{Q}$ is orthogonal and  $\Sigma$ is a diagonal matrix containing the singular values of $\mathcal{T}$.
Thus,
\begin{eqnarray*}
|\mathcal{A}|^{-1} \mathcal{A} &=&\mathcal{Q} 
\begin{bmatrix} 
{I}_n \otimes I_m  & \\
   &  - {I}_n \otimes I_m
\end{bmatrix}
  \mathcal{Q}^{\top},
\end{eqnarray*}
which is both symmetric and orthogonal. The proof is complete.
\end{proof}

In other words, Proposition \ref{proposition:ideal_precon} shows that the preconditioner $|\mathcal{A}|$ can render the eigenvalues exactly at $\pm 1$, providing a good guide to designing effective preconditioners for $\mathcal{A}$. As a consequence of Proposition \ref{proposition:ideal_precon}, we conclude that the MINRES with $|\mathcal{A}|$ as a preconditioner can achieve mesh-independent convergence, i.e., a convergence rate independent of both the meshes and the regularization parameter.

Despite the fact that $|\mathcal{A}|$ is an ideal preconditioner, its direct application has the drawback of being computationally expensive in general. Proposition \ref{proposition:ideal_precon} reveals an eigendecomposition of both $\mathcal{A}$ and $|\mathcal{A}|$, allowing us to develop preconditioners based on the spectral symbol. In what follows, we will show that $\mathcal{P}_{S}$ defined by (\ref{eqn:matrix_P}) is a good preconditioner for $\mathcal{A}$ in the sense that the preconditioned matrix $\mathcal{P}_{S}^{-1} \mathcal{A}$ can be expressed as the sum of a Hermitian unitary matrix and a low-rank matrix.

\subsection{MINRES - block $\omega$-circulant based preconditioner}\label{sub:strang_preconditioner}
The following theorem accounts for the preconditioning effect of MINRES-$\mathcal{P}_{S}$.
\begin{theorem}\label{thm:strang_preconditioner}
Let $\mathcal{A} \in \mathbb{R}^{2mn \times 2mn} ,|\mathcal{P}_{S}| \in \mathbb{C}^{2mn \times 2mn} $ be defined by (\ref{eqn:matrix_A}) and (\ref{eqn:matrix_P}), respectively. Then,
\[
|\mathcal{P}_{S}|^{-1} \mathcal{A} = \widetilde{\mathcal{Q}}_{1} + \widetilde{\mathcal{R}}_{2},
\]
where $\widetilde{\mathcal{Q}}_{1}$ is both Hermitian and unitary and $\mathrm{rank}(\widetilde{\mathcal{R}}_{2}) \leq 4m$.
\end{theorem}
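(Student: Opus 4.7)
My plan is to mirror the ``ideal'' identity of Proposition~\ref{proposition:ideal_precon} on an auxiliary Hermitian matrix obtained from $\mathcal{A}$ by swapping the Toeplitz block $\mathcal{T}$ for the $\omega$-circulant-based block $\mathcal{S}$. Concretely, I would introduce
\[
\mathcal{A}_{S} := \begin{bmatrix} \alpha I_{n}\otimes I_{m} & \mathcal{S}^{*} \\ \mathcal{S} & -\alpha I_{n}\otimes I_{m} \end{bmatrix},
\]
which is Hermitian. A direct $2\times 2$ block multiplication shows $\mathcal{A}_{S}^{2} = \mathrm{diag}(\mathcal{S}^{*}\mathcal{S}+\alpha^{2}I,\ \mathcal{S}\mathcal{S}^{*}+\alpha^{2}I) = |\mathcal{P}_{S}|^{2}$, so by uniqueness of the positive square root $|\mathcal{A}_{S}| = |\mathcal{P}_{S}|$. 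Both sides are invertible because $\alpha^{2}>0$.

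Because $\mathcal{A}_{S}$ is Hermitian, a spectral decomposition $\mathcal{A}_{S} = QDQ^{*}$ gives $|\mathcal{A}_{S}|^{-1}\mathcal{A}_{S} = Q\,\mathrm{sgn}(D)\,Q^{*}$, which is simultaneously Hermitian and unitary; this is the Hermitian analogue of Proposition~\ref{proposition:ideal_precon}, and is in fact easier to derive since no SVD is needed. I would therefore define
\[
\widetilde{\mathcal{Q}}_{1} := |\mathcal{P}_{S}|^{-1}\mathcal{A}_{S} = |\mathcal{A}_{S}|^{-1}\mathcal{A}_{S}, \qquad
\widetilde{\mathcal{R}}_{2} := |\mathcal{P}_{S}|^{-1}(\mathcal{A}-\mathcal{A}_{S}),
\]
so that $|\mathcal{P}_{S}|^{-1}\mathcal{A} = \widetilde{\mathcal{Q}}_{1}+\widetilde{\mathcal{R}}_{2}$ with $\widetilde{\mathcal{Q}}_{1}$ Hermitian and unitary by construction.

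The remaining task is the rank bound on $\widetilde{\mathcal{R}}_{2}$. The difference $\mathcal{A}-\mathcal{A}_{S}$ is block anti-diagonal with off-diagonal entries $\mathcal{T}^{\top}-\mathcal{S}^{*}$ and $\mathcal{T}-\mathcal{S}$. Proposition~\ref{thm:strang_preconditioner_gmres} already establishes $\mathrm{rank}(B_{n}-S_{n})\leq 2$, whence $\mathrm{rank}(\mathcal{T}-\mathcal{S}) = \mathrm{rank}((B_{n}-S_{n})\otimes I_{m}) \leq 2m$, and taking conjugate transposes gives $\mathrm{rank}(\mathcal{T}^{\top}-\mathcal{S}^{*})\leq 2m$. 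Summing the two anti-diagonal contributions yields $\mathrm{rank}(\mathcal{A}-\mathcal{A}_{S})\leq 4m$, and since $|\mathcal{P}_{S}|^{-1}$ is invertible, left multiplication preserves rank, giving $\mathrm{rank}(\widetilde{\mathcal{R}}_{2})\leq 4m$.

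The main obstacle, in my view, is spotting the algebraic identity $|\mathcal{P}_{S}|=|\mathcal{A}_{S}|$ that makes the auxiliary Hermitian matrix $\mathcal{A}_{S}$ relevant. A more naive route via $|\mathcal{P}_{S}|^{-1}\mathcal{A} = |\mathcal{A}|^{-1}\mathcal{A} + (|\mathcal{P}_{S}|^{-1}-|\mathcal{A}|^{-1})\mathcal{A}$ would require controlling $\mathrm{rank}\bigl(\sqrt{\mathcal{S}^{*}\mathcal{S}+\alpha^{2}I}-\sqrt{\mathcal{T}^{\top}\mathcal{T}+\alpha^{2}I}\bigr)$, and the standard integral representation of the matrix square root does not propagate an $n$-independent rank bound through the integral. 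Routing through $\mathcal{A}_{S}$ sidesteps the square-root difference entirely and reduces the entire estimate to the rank-$2$ perturbation $B_{n}-S_{n}$ already handled for the GMRES analysis.
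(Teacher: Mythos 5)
Your proof is correct and takes essentially the same route as the paper: your auxiliary Hermitian matrix $\mathcal{A}_S$ is precisely the paper's $s(\mathcal{A})$, the identity $|\mathcal{A}_S|=|\mathcal{P}_S|$ is the same pivotal observation, and the rank bound on $\mathcal{A}-\mathcal{A}_S$ reuses $\mathrm{rank}(S_n-B_n)\le 2$ from Proposition~\ref{thm:strang_preconditioner_gmres} exactly as the paper does. The only cosmetic differences are the name of the auxiliary matrix and a sign convention in defining $\widetilde{\mathcal{R}}_2$.
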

\begin{proof}
    Let $s(\mathcal{A})=\mat{\alpha {I}_n \otimes I_m & \mathcal{S}^{*} \\ \mathcal{S} & -\alpha {I}_n \otimes I_m}$. Notice that $|s(\mathcal{A})| = \sqrt{s(\mathcal{A})^2} = |\mathcal{P}_{S}| = \sqrt{\mathcal{P}_{S}^*\mathcal{P}_{S} }$. \\
    Simple calculations show that
    \begin{align*}
        s(\mathcal{A}) - \mathcal{A} &= \mat{&(\mathcal{S-T})^{*} \\ \mathcal{S-T}&} \\
        &= \mat{&(S_{n}-B_{n})^{*}\otimes I_{m} \\ (S_{n}-B_{n})\otimes I_{m}}.
    \end{align*}
    Thus, $\mathrm{rank}( s(\mathcal{A}) - \mathcal{A}  ) \leq 4m$, following an argument from the proof of Proposition \ref{thm:strang_preconditioner_gmres}. Thus, we have
    \begin{align*}
        |\mathcal{P}_{S}|^{-1} \mathcal{A} &= |s(\mathcal{A})|^{-1} \mathcal{A}\\
        &= \underbrace{|s(\mathcal{A})|^{-1} s(\mathcal{A})}_{=:\widetilde{\mathcal{Q}}_{1}} - \underbrace{|s(\mathcal{A})|^{-1} ( s(\mathcal{A}) - \mathcal{A} ) }_{=:\widetilde{\mathcal{R}}_{2}},
    \end{align*}
    where $\mathrm{rank}(\widetilde{\mathcal{R}}_{2})  \leq 4m$. 

    Since $s(\mathcal{A})$ is Hermitian, we have $|s(\mathcal{A})|=\mathcal{W}\Psi\mathcal{W}^*$, where $\mathcal{W}$ is a unitary matrix and $\Psi$ is diagonal matrix containing the eigenvalues of $s(\mathcal{A})$. Correspondingly, we have $s(\mathcal{A})=\mathcal{W}|\Psi|\mathcal{W}^*$, where $|\Psi|$ is diagonal matrix containing the absolute value eigenvalues of $s(\mathcal{A})$. Thus, we have $\widetilde{\mathcal{Q}}_{1} =\mathcal{W}|\Psi|^{-1}\Psi\mathcal{W}^*$,  which is clearly Hermitian. Also, as $\widetilde{\mathcal{Q}}_{1}^2 =\mathcal{W}|\Psi|^{-1}\Psi|\Psi|^{-1}\Psi\mathcal{W}^*=\mathcal{W} I_{2nm}\mathcal{W}^*=I_{2nm}$, we know that $\widetilde{\mathcal{Q}}_{1}$ is unitary.

    The proof is complete.
\end{proof}

As a consequence of Theorem \ref{thm:strang_preconditioner} and \cite[Corollary 3]{BRANDTS20103100}, we know that the preconditioned matrix $|\mathcal{P}_{S}|^{-1} \mathcal{A}$ has clustered eigenvalues at $\pm 1$, with a number of outliers independent of $n$ in general (i.e., depending only on $m$). Thus, the convergence is independent of the time step in general, and we can expect that MINRES for $\mathcal{A}$ will converge rapidly in exact arithmetic with $|\mathcal{P}_{S}|$ as the preconditioner.

\subsection{MINRES - modified block $\omega$-circulant based preconditioner}\label{sub:Modified Strang}

To support the preconditioning effect of $\mathcal{P}_{MS}$, we will show that it is spectrally equivalent to $\mathcal{P}_S$. Before proceeding, we introduce the following auxiliary matrix $\mathcal{P}_{AS}$ which is useful to showing the preconditioning effect for $\mathcal{P}_{MS}$.
\begin{eqnarray}\label{eqn:matrix_P_AuxS}
&&\mathcal{P}_{AS}\\ \nonumber
&=&
\begin{bmatrix} 
\sqrt{(S_{n}^* S_n +  \alpha^2 I_{n} )\otimes I_{m} + \tau^2 I_{n} \otimes L_m^2}
&  \\
  & \sqrt{(S_n S_{n}^* +  \alpha^2 I_{n} )\otimes I_{m} + \tau^2 I_{n} \otimes L_m^2}
\end{bmatrix}.
\end{eqnarray}

Also, the following lemma is useful.

\begin{lemma}\label{lemma:S_symmetric_part}
Let $S_n\in \mathbb{C}^{n \times n}$ be defined in (\ref{eqn:matrix_S}). Then, $S_n^* + S_n$ is (Hermitian) positive semi-definite.
\end{lemma}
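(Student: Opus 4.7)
The plan is to diagonalize $S_n$ simultaneously with $S_n^*$ by exploiting the fact that $S_n^{(1)}$ and $S_n^{(2)}$ are both $\omega$-circulant and therefore share the same unitary eigenvector matrix $\Gamma_n \mathbb{F}_n$ displayed in \eqref{eqn:decompositiob_S12}. Combining the two factorizations yields
\[
S_n \;=\; S_n^{(1)}\bigl(S_n^{(2)}\bigr)^{-1} \;=\; (\Gamma_n \mathbb{F}_n)\, D\, (\Gamma_n \mathbb{F}_n)^{*},
\qquad D \;=\; \Lambda_n^{(1)}\bigl(\Lambda_n^{(2)}\bigr)^{-1},
\]
so $S_n$ is unitarily diagonalizable, and consequently
\[
S_n + S_n^{*} \;=\; (\Gamma_n \mathbb{F}_n)\,\bigl(D + \overline{D}\bigr)\,(\Gamma_n \mathbb{F}_n)^{*},
\]
which is manifestly Hermitian with eigenvalues $2\,\mathrm{Re}(D_{kk}) = 2\,\mathrm{Re}\!\left(f_1(\phi_k)/f_2(\phi_k)\right)$ for $\phi_k = (\zeta + 2\pi k)/n$.

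The remaining step is to verify that $\mathrm{Re}\bigl(f_1(\phi)/f_2(\phi)\bigr) \ge 0$ for every real $\phi$ and every $\theta \in [\tfrac{1}{2},1]$. I would do this by multiplying numerator and denominator by $\overline{f_2(\phi)}$: writing $e^{\mathbf{i}\phi} = \cos\phi + \mathbf{i}\sin\phi$ and setting $a = \theta + (1-\theta)\cos\phi$, $b = (1-\theta)\sin\phi$, a short direct computation collapses the real part to
\[
\mathrm{Re}\!\left(\frac{1-e^{\mathbf{i}\phi}}{\theta+(1-\theta)e^{\mathbf{i}\phi}}\right) \;=\; \frac{(2\theta-1)(1-\cos\phi)}{a^2+b^2}.
\]
Both factors in the numerator are nonnegative under the hypotheses $\theta \in [\tfrac{1}{2},1]$ and $\phi \in \mathbb{R}$, while the denominator $a^2+b^2 = |f_2(\phi)|^2$ is strictly positive by the standing nonsingularity assumption on $S_n^{(2)}$. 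Hence every eigenvalue of $S_n + S_n^*$ is nonnegative, which is the desired conclusion.

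The only genuinely delicate point is the algebraic identity giving the clean expression $(2\theta-1)(1-\cos\phi)$ for the real part; once that simplification is carried out, the sign structure is immediate. Everything else is bookkeeping on the shared spectral factorization of $\omega$-circulants.
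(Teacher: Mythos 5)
Your proof is correct and follows essentially the same route as the paper: diagonalize $S_n$ via the shared $\omega$-circulant eigenbasis $\Gamma_n\mathbb{F}_n$, compute the eigenvalues of $S_n^*+S_n$ as $2\,\mathrm{Re}\bigl(f_1(\phi_k)/f_2(\phi_k)\bigr)$, and simplify to $\dfrac{2(2\theta-1)(1-\cos\phi_k)}{|f_2(\phi_k)|^2}\ge 0$. The paper writes the eigenvalue as the sum of the two conjugate terms rather than $2\,\mathrm{Re}(\cdot)$, but the underlying algebra and the nonnegativity conclusion are identical.
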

\begin{proof}
The eigenvalues of $S_n$ can be found explicitly, which are 
\begin{eqnarray*}
\lambda_k(S_n) = \frac{\lambda_k(S_n^{(1)})}{\lambda_k(S_n^{(2)})} = \frac{1 - \exp{\big(\mathbf{i}(\frac{\zeta + 2\pi k}{n})\big)}}{\theta + (1-\theta)\exp{\big(\mathbf{i}(\frac{\zeta + 2\pi k}{n})\big)}},    
\end{eqnarray*}
$ k = 0,\dots, n-1.$
Thus, we have 
\begin{eqnarray*}
\lambda_k(S_n^* + S_n) &=& \frac{1 - \exp{\big(-\mathbf{i}(\frac{\zeta + 2\pi k}{n})\big)}}{\theta + (1-\theta)\exp{\big(-\mathbf{i}(\frac{\zeta + 2\pi k}{n})\big)}} + \frac{1 - \exp{\big(\mathbf{i}(\frac{\zeta + 2\pi k}{n})\big)}}{\theta + (1-\theta)\exp{\big(\mathbf{i}(\frac{\zeta + 2\pi k}{n})\big)}}\\
&=&\frac{2(2\theta-1)\big(1-\cos{(\frac{\zeta + 2\pi k}{n})}\big)}{ \theta^2 + (1-\theta)^2 + 2\theta(1-\theta)\cos{(\frac{\zeta + 2\pi k}{n})}  },
\end{eqnarray*}
which is always non-negative provided $S_n^{(2)}$ is nonsingular by assumption. The proof is complete.
\end{proof}

Denote by $\sigma({\bf C})$ the spectrum of a square matrix ${\bf C}$.

\begin{lemma}\label{lemma:P_AuxS_one}
Let $X=S_n\otimes I_m \in \mathbb{C}^{mn\times mn}$ and $Y=\tau  I_n\otimes (-L_m) \in \mathbb{R}^{mn\times mn}$, with the involved notation defined in (\ref{eqn:matrix_S}). Then,
\begin{eqnarray*}
\sigma \bigg(\sqrt{X^*X+Y^*Y +  \alpha^2 {I}_n \otimes I_m }^{-1} \sqrt{(X+Y)^* (X+Y) +  \alpha^2 {I}_n \otimes I_m } \bigg) \subseteq [1, \sqrt{2}].
\end{eqnarray*}
\end{lemma}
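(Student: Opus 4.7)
The plan is to simultaneously diagonalize the three building blocks $X$, $X^{*}$ and $Y$ and thereby reduce the spectral bound to an elementary scalar inequality.

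First I would verify that $X = S_n \otimes I_m$ is normal. Since both $S_n^{(1)}$ and $S_n^{(2)}$ are $\omega$-circulant, (\ref{eqn:decompositiob_S12}) shows that they share the unitary eigenbasis $\Gamma_n \mathbb{F}_n$, so
\[
S_n \;=\; S_n^{(1)}(S_n^{(2)})^{-1} \;=\; (\Gamma_n \mathbb{F}_n)\,\Lambda_n^{(1)}(\Lambda_n^{(2)})^{-1}\,(\Gamma_n \mathbb{F}_n)^{*}
\]
is unitarily diagonalizable. Together with the orthogonal diagonalization $-L_m = V_m D_m V_m^{\top}$ (with $D_m \succeq 0$) of the symmetric positive semi-definite matrix $-L_m$, the unitary matrix $\mathcal{U} := (\Gamma_n \mathbb{F}_n) \otimes V_m$ simultaneously diagonalizes $X$, $X^{*}$ and $Y = Y^{*}$, with respective diagonal entries $\lambda_k$, $\overline{\lambda_k}$ and $\tau d_j$, where $\lambda_k := \lambda_k(S_n)$ and $d_j \ge 0$. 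In particular, $X, X^{*}, Y$ commute pairwise.

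Next, since $X^{*}X + Y^{*}Y + \alpha^{2}I$ and $(X+Y)^{*}(X+Y) + \alpha^{2}I$ are Hermitian polynomial expressions in these commuting matrices, they, and hence their principal square roots, are diagonalized in the basis $\mathcal{U}$, with the two diagonals given by $|\lambda_k|^{2} + \tau^{2} d_j^{2} + \alpha^{2}$ and $|\lambda_k + \tau d_j|^{2} + \alpha^{2}$, respectively. Consequently the spectrum to be bounded is exactly
\[
\Bigl\{\,\sqrt{\tfrac{|\lambda_k + \tau d_j|^{2} + \alpha^{2}}{|\lambda_k|^{2} + \tau^{2} d_j^{2} + \alpha^{2}}}\ :\ k = 0,\dots,n-1,\ j = 1,\dots,m\,\Bigr\}.
\]

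Finally I would prove the two scalar estimates. Expanding $|\lambda_k + \tau d_j|^{2} = |\lambda_k|^{2} + 2\tau d_j\,\mathrm{Re}(\lambda_k) + \tau^{2} d_j^{2}$, the lower bound $1$ is equivalent to $\mathrm{Re}(\lambda_k) \ge 0$; by normality of $S_n$ the eigenvalues of $S_n^{*} + S_n$ are precisely $2\mathrm{Re}(\lambda_k)$, so Lemma \ref{lemma:S_symmetric_part} delivers this immediately. The upper bound $\sqrt{2}$ reduces to $2\tau d_j\,\mathrm{Re}(\lambda_k) \le |\lambda_k|^{2} + \tau^{2} d_j^{2} + \alpha^{2}$, which follows from $\mathrm{Re}(\lambda_k) \le |\lambda_k|$ combined with the AM-GM inequality $2|\lambda_k|\tau d_j \le |\lambda_k|^{2} + \tau^{2} d_j^{2}$. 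The main obstacle is the clean justification that $S_n$ is normal so that the simultaneous diagonalization step is rigorous; once that is in place, the claim collapses to the short computation above.
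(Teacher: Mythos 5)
Your proposal is correct and takes essentially the same approach as the paper: both simultaneously diagonalize by the unitary $\Gamma_n\mathbb{F}_n \otimes (\text{eigenbasis of }-L_m)$, reduce the preconditioned spectrum to a ratio of scalar quantities $\sqrt{(|z_1+z_2|^2+\alpha^2)/(|z_1|^2+|z_2|^2+\alpha^2)}$, obtain the lower bound $1$ from Lemma~\ref{lemma:S_symmetric_part} (equivalently $\mathrm{Re}\,\lambda_k(S_n)\ge 0$), and obtain the upper bound $\sqrt{2}$ from the elementary inequality $\frac12|z_1+z_2|^2\le |z_1|^2+|z_2|^2$. The only cosmetic difference is that you phrase the diagonalization via the normality of $S_n$ and compute the eigenvalue ratios directly, whereas the paper works with a Rayleigh quotient; this obstacle you flag is already settled by the explicit $\omega$-circulant factorization \eqref{eqn:decompositiob_S12}, exactly as you note.
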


\begin{proof}
Let \[Z=\sqrt{(X+Y)^* (X+Y) +  \alpha^2 {I}_n \otimes I_m }\] and \[\widetilde{Z}=\sqrt{X^*X+Y^*Y +  \alpha^2 {I}_n \otimes I_m }. \] Knowing that the eigendecomposition of $-L_{m}$ is given by $-L_{m} = \mathbb{U}_{m} \Omega_{m} \mathbb{U}_{m}^{\top}$ with $-L_m$ assumed SPD, where $\mathbb{U}_{m}$ is orthogonal and $ \Omega_{m}$ is a real-valued diagonal matrix containing the eigenvalues of $-L_{m}$, we have

\begin{eqnarray*}
    Z&=&\sqrt{(X+Y)^* (X+Y) +  \alpha^2 {I}_n \otimes I_m }\\
    &=& (\Gamma_n\mathbb{F}_{n}\otimes \mathbb{U}_{m}) \\
    &&\times \sqrt{ (\Lambda_{n}\otimes I_{m} + \tau I_{n} \otimes \Omega_{m})^*(\Lambda_{n}\otimes I_{m} + \tau I_{n} \otimes \Omega_{m})   +\alpha^2 {I}_n \otimes I_m}\\
    &&\times (\Gamma_n\mathbb{F}_{n}\otimes \mathbb{U}_{m})^{*}
\end{eqnarray*}
and
\begin{eqnarray*}
    \widetilde{Z}&=&\sqrt{X^*X+Y^*Y +  \alpha^2 {I}_n \otimes I_m }\\
    &=& (\Gamma_n\mathbb{F}_{n}\otimes \mathbb{U}_{m}) \\
    &&\times \sqrt{ (\Lambda_{n}\otimes I_{m})^* (\Lambda_{n}\otimes I_{m})+(\tau I_{n} \otimes \Omega_{m})^{\top}(\tau I_{n} \otimes \Omega_{m}) +  \alpha^2 {I}_n \otimes I_m } \\
    &&\times(\Gamma_n\mathbb{F}_{n}\otimes \mathbb{U}_{m})^{*}.
\end{eqnarray*}
Since $Z$ and $\widetilde{Z}$ are diagonalized by the unitary matrix $\mathbb{Q}=\Gamma_n\mathbb{F}_{n}\otimes \mathbb{U}_{m}$, they are simultaneously diagonalizable.

Since both $Z$ and $\widetilde{Z}$ are invertible, they are Hermitian positive definite by construction. To examine the target spectrum of $\widetilde{Z}^{-1}Z$, we consider the Rayleigh quotient for complex $\mathbf{v} \neq \mathbf{0}$:

\begin{eqnarray*}
R &:=\joinrel=\joinrel=\joinrel=\joinrel=\joinrel=& \frac{ \mathbf{v}^* \sqrt{(X+Y)^* (X+Y) +  \alpha^2 {I}_n \otimes I_m } \mathbf{v} }{ \mathbf{v}^* \sqrt{X^*X+Y^*Y +  \alpha^2 {I}_n \otimes I_m } \mathbf{v} }\\
&\underbrace{=\joinrel=\joinrel=\joinrel=\joinrel=\joinrel=}_{{\bf w}=\mathbb{Q}{\bf v}}& \frac{ \mathbf{w}^* \sqrt{ (\Lambda_{n}\otimes I_{m} + \tau I_{n} \otimes \Omega_{m})^*(\Lambda_{n}\otimes I_{m} + \tau I_{n} \otimes \Omega_{m})   +\alpha^2 {I}_n \otimes I_m}  \mathbf{w} }{ \mathbf{w}^* \sqrt{ (\Lambda_{n}\otimes I_{m})^* (\Lambda_{n}\otimes I_{m})+( \tau I_{n} \otimes \Omega_{m})^{\top}( \tau I_{n} \otimes \Omega_{m}) +  \alpha^2 {I}_n \otimes I_m } \mathbf{w} }.
\end{eqnarray*} By the invertibility of $Z$ and $\widetilde{Z}$, both numerator and denominator are positive. 

On one hand, we estimate an upper bound for $R$. Let $z_1$ and $z_2$ be an entry of $\Lambda_{n}\otimes I_{m}$ and $ \tau I_{n} \otimes \Omega_{m}$, respectively. We have
	\begin{align*}
		\frac{{1}}{2}(\overline{z_1 + z_2})(z_1+z_2) \leq {\bar{z}_1 {z_1}+\bar{z}_2 {z_2}} ,
	\end{align*}
 which implies
 	\begin{align*}
		\frac{{1}}{2}(\overline{z_1 + z_2})(z_1+z_2) + \alpha^2 \leq {\bar{z}_1 {z_1}+\bar{z}_2 {z_2}} + \alpha^2 ,
	\end{align*}
 since $\alpha^2=\frac{\tau^2}{{\gamma}}$ is positive.
 Thus, we have
 	\begin{align*}
		\frac{{1}}{\sqrt{2}}\sqrt{(\overline{z_1 + z_2})(z_1+z_2) + \alpha^2}\leq \sqrt{{\bar{z}_1 {z_1}+\bar{z}_2 {z_2}} + \alpha^2  }.
	\end{align*}
Therefore, 
 \begin{equation*}
   R \leq \sqrt{2}.
 \end{equation*}

On the other hand, we estimate an lower bound for $R$ by first examining the definiteness of the matrix $X^*Y + Y^*X$. Since $S_n^*+S_n$ is (Hermitian) non-negative definite by Lemma \ref{lemma:S_symmetric_part}, which implies
\begin{eqnarray*}
X^*Y + Y^*X &=& (S_n\otimes I_m)^{\top} (\tau  I_n\otimes (-L_m)) + (\tau  I_n\otimes (-L_m))^{\top}(S_n\otimes I_m ) \\
&=&\tau (S_n^* + S_n) \otimes (-L_m)
\end{eqnarray*} is also non-negative definite. Thus, we also have
\begin{eqnarray*}
  \sqrt{(\overline{z_1 + z_2})(z_1+z_2) + \alpha^2} &=& \sqrt{ \bar{z}_1 {z_1} + \bar{z}_2 {z_2} + \bar{z}_1 {z_2} + \bar{z}_2 {z_1} + \alpha^2}  \\
&\geq& \sqrt{{\bar{z}_1 {z_1}+\bar{z}_2 {z_2}} + \alpha^2  }, 
\end{eqnarray*} implying
  \begin{equation*}
   1 \leq R.
 \end{equation*}

 The proof is complete.
\end{proof}

Similarly, we can show the following lemma:

\begin{lemma}\label{lemma:P_AuxS_two}
Let $X=S_n\otimes I_m \in \mathbb{C}^{mn\times mn}$ and $Y=\tau  I_n\otimes (-L_m) \in \mathbb{R}^{mn\times mn}$, with the involved notation defined in (\ref{eqn:matrix_S}). Then,
\begin{eqnarray*}
\sigma \bigg(\sqrt{XX^*+YY^* +  \alpha^2 {I}_n \otimes I_m }^{-1} \sqrt{(X+Y)(X+Y)^* +  \alpha^2 {I}_n \otimes I_m } \bigg) \subseteq [1, \sqrt{2}].
\end{eqnarray*}
\end{lemma}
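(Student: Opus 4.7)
The plan is to follow the proof of Lemma \ref{lemma:P_AuxS_one} nearly verbatim, exploiting the fact that $Y = \tau I_n \otimes (-L_m)$ is Hermitian (since $-L_m$ is SPD), so $YY^* = Y^*Y$. Only the substitution of $X^*X$ by $XX^*$ and the corresponding cross term $X^*Y + Y^*X \leadsto XY^* + YX^*$ need to be re-examined. To set up, I would first establish simultaneous diagonalization: because $S_n^{(1)}$ and $S_n^{(2)}$ are both $\omega$-circulant with the same $\omega$, they share the eigenbasis $\Gamma_n \mathbb{F}_n$, so $S_n = S_n^{(1)}(S_n^{(2)})^{-1}$ is normal and admits the decomposition $S_n = (\Gamma_n \mathbb{F}_n)\,\Lambda_n^{(1)}(\Lambda_n^{(2)})^{-1}\,(\Gamma_n\mathbb{F}_n)^*$. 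Together with $-L_m = \mathbb{U}_m \Omega_m \mathbb{U}_m^\top$, the unitary matrix $\mathbb{Q} = \Gamma_n \mathbb{F}_n \otimes \mathbb{U}_m$ simultaneously diagonalizes $XX^* = S_n S_n^* \otimes I_m$, $YY^* = \tau^2 I_n \otimes L_m^2$, the cross term $XY^* + YX^*$, and hence the HPD matrices $Z := \sqrt{(X+Y)(X+Y)^* + \alpha^2 I_n \otimes I_m}$ and $\widetilde{Z} := \sqrt{XX^* + YY^* + \alpha^2 I_n \otimes I_m}$.

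Next, via the substitution $\mathbf{w} = \mathbb{Q}^* \mathbf{v}$, the Rayleigh quotient $R := (\mathbf{v}^* Z \mathbf{v}) / (\mathbf{v}^* \widetilde{Z} \mathbf{v})$ reduces pointwise to a ratio of the form $\sqrt{|z_1 + z_2|^2 + \alpha^2} / \sqrt{|z_1|^2 + |z_2|^2 + \alpha^2}$, where $z_1$ and $z_2$ are matching diagonal entries of $\Lambda_n^{(1)}(\Lambda_n^{(2)})^{-1} \otimes I_m$ and $\tau I_n \otimes \Omega_m$ respectively. The upper bound $R \leq \sqrt{2}$ then follows from $|z_1+z_2|^2 \leq 2(|z_1|^2 + |z_2|^2)$ combined with $\alpha^2 \leq 2\alpha^2$, in exactly the same fashion as in Lemma \ref{lemma:P_AuxS_one}.

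For the lower bound $R \geq 1$, I would compute
\[
XY^* + YX^* = (S_n \otimes I_m)(\tau I_n \otimes (-L_m)) + (\tau I_n \otimes (-L_m))(S_n^* \otimes I_m) = \tau (S_n + S_n^*) \otimes (-L_m),
\]
which is (Hermitian) positive semi-definite by Lemma \ref{lemma:S_symmetric_part} together with the SPD property of $-L_m$. Pointwise this forces $\mathrm{Re}(\bar{z}_1 z_2) \geq 0$, whence $|z_1 + z_2|^2 \geq |z_1|^2 + |z_2|^2$, yielding $R \geq 1$. The only genuine check in the whole argument is this cross-term computation, namely confirming that swapping $X^*X, Y^*Y, X^*Y + Y^*X$ for $XX^*, YY^*, XY^* + YX^*$ still produces the same positive semi-definite matrix $\tau(S_n + S_n^*) \otimes (-L_m)$. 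This is immediate from the Hermiticity of both tensor factors $I_n$ and $-L_m$ in $Y$, so no new spectral ingredient beyond those of Lemma \ref{lemma:P_AuxS_one} is required.
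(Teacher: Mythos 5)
Your proposal is correct and matches the paper's intent: the authors present no explicit proof of Lemma~\ref{lemma:P_AuxS_two}, stating only that it follows ``similarly'' to Lemma~\ref{lemma:P_AuxS_one}, and you have carried out precisely that adaptation. You correctly isolate the one nontrivial check—that swapping $X^*X,\ Y^*Y,\ X^*Y+Y^*X$ for $XX^*,\ YY^*,\ XY^*+YX^*$ preserves the structure—verifying it via the Hermiticity of $Y$, the normality of $S_n$ (both factors share the eigenbasis $\Gamma_n\mathbb{F}_n$), and the identical cross term $\tau(S_n+S_n^*)\otimes(-L_m)$, which is PSD by Lemma~\ref{lemma:S_symmetric_part}; the subsequent Rayleigh-quotient bounds then go through verbatim.
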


\begin{proposition}\label{proposition:P_AuxS}
Let $|\mathcal{P}_{S}| ,\mathcal{P}_{AS} \in \mathbb{C}^{2mn\times 2mn}$ be defined by (\ref{eqn:matrix_P}) and (\ref{eqn:matrix_P_AuxS}), respectively. Then,
\begin{eqnarray*}
    \sigma( \mathcal{P}_{AS}^{-1}|\mathcal{P}_{S}| ) \subseteq [1, \sqrt{2}].
\end{eqnarray*}
\end{proposition}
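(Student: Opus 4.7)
The plan is to recognize that Proposition \ref{proposition:P_AuxS} is essentially a direct corollary of Lemmas \ref{lemma:P_AuxS_one} and \ref{lemma:P_AuxS_two}, once one matches the block entries of $\mathcal{P}_{AS}$ and $|\mathcal{P}_S|$ to the expressions appearing in those lemmas. The key point to exploit is that both matrices are block-diagonal of the same block size, so their product remains block-diagonal and its spectrum is the union of the spectra of the two diagonal blocks.

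First I would set $X := S_n \otimes I_m$ and $Y := \tau I_n \otimes (-L_m)$, so that $\mathcal{S} = X+Y$ by (\ref{eqn:matrix_S}). Using that $-L_m$ is symmetric (in fact SPD) together with the mixed-product property of the Kronecker product, a short direct expansion gives
\begin{equation*}
X^*X + Y^*Y + \alpha^2 I_n \otimes I_m \;=\; (S_n^* S_n + \alpha^2 I_n)\otimes I_m + \tau^2 I_n \otimes L_m^2,
\end{equation*}
which is exactly the matrix inside the square root in the $(1,1)$ block of $\mathcal{P}_{AS}$, while $(X+Y)^*(X+Y) + \alpha^2 I_n \otimes I_m$ coincides with $\mathcal{S}^*\mathcal{S} + \alpha^2 I_n \otimes I_m$, which is the matrix inside the square root in the $(1,1)$ block of $|\mathcal{P}_S|$. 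An entirely analogous identification holds for the $(2,2)$ blocks after replacing the left multiplications by right multiplications, matching the setup of Lemma \ref{lemma:P_AuxS_two}.

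Next I would invoke block-diagonality: since both $\mathcal{P}_{AS}$ and $|\mathcal{P}_S|$ are block-diagonal, so is $\mathcal{P}_{AS}^{-1}|\mathcal{P}_S|$, and its two diagonal blocks are precisely the matrices whose spectra are contained in $[1,\sqrt{2}]$ by Lemmas \ref{lemma:P_AuxS_one} and \ref{lemma:P_AuxS_two}, respectively. Because the spectrum of a block-diagonal matrix is the union of the spectra of its blocks, the inclusion $\sigma(\mathcal{P}_{AS}^{-1}|\mathcal{P}_S|) \subseteq [1,\sqrt{2}]$ follows at once.

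There is no real obstacle here since the substantive work has already been carried out in the two preceding lemmas; the only step requiring a little care is the bookkeeping in the Kronecker-product identities that match $\mathcal{P}_{AS}$ and $|\mathcal{P}_S|$ with the operator expressions in $X,Y$, especially the observation $Y^*Y = \tau^2 I_n \otimes L_m^2$ coming from the symmetry of $L_m$.
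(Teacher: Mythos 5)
Your proposal is correct and follows essentially the same route as the paper: identify the diagonal blocks of $\mathcal{P}_{AS}$ and $|\mathcal{P}_S|$ with the operator expressions in $X,Y$, observe that the product of block-diagonal matrices is block-diagonal, and apply Lemmas \ref{lemma:P_AuxS_one} and \ref{lemma:P_AuxS_two} to the two blocks. The extra Kronecker-product bookkeeping you spell out (e.g.\ $Y^*Y=\tau^2 I_n\otimes L_m^2$) is implicit in the paper but is the same verification.
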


\begin{proof}
Knowing that
\[
|\mathcal{P}_{S}| = \mat{ \sqrt{(X+Y)^* (X+Y) +  \alpha^2 {I}_n \otimes I_m } &\\&\sqrt{(X+Y) (X+Y)^* +  \alpha^2 {I}_n \otimes I_m} }
\]
and
\[
\mathcal{P}_{AS} = \mat{ \sqrt{X^* X + Y^* Y +  \alpha^2 {I}_n \otimes I_m } &\\&\sqrt{XX^*+YY^* +  \alpha^2 {I}_n \otimes I_m } },
\]
we know that $\sigma( \mathcal{P}_{AS}^{-1}|\mathcal{P}_{S}| ) \subseteq [1, \sqrt{2}],$ by Lemmas \ref{lemma:P_AuxS_one} and \ref{lemma:P_AuxS_two}.

\end{proof}

\begin{remark}\label{remark:unity_for_CN}
    When the Crank-Nicolson method (i.e., $\theta = \frac{1}{2})$ is adopted, $S_n^* +S_n$ is a null matrix from Lemma \ref{lemma:S_symmetric_part}. Using this fact and considering the proofs of Lemmas \ref{lemma:P_AuxS_one} \& \ref{lemma:P_AuxS_two}, we can show that $X^*Y + Y^*X$ is also a null matrix which implies $\mathcal{P}_{AS} = |\mathcal{P}_{S}|$.
\end{remark}

\begin{lemma}\label{lemma:P_AuxS_three}
Let $X=S_n\otimes I_m \in \mathbb{C}^{mn\times mn}$ and $Y=\tau  I_n\otimes (-L_m) \in \mathbb{R}^{mn\times mn}$, with the involved notation defined in (\ref{eqn:matrix_S}). Then,
\begin{eqnarray*}
\sigma \bigg( \big(\sqrt{X^*X +  \alpha^2 {I}_n \otimes I_m} + \sqrt{Y^*Y} \big)^{-1} \sqrt{X^*X+Y^*Y +  \alpha^2 {I}_n \otimes I_m } \bigg) \subseteq \bigg[ \frac{1}{\sqrt{2}} , 1 \bigg] .
\end{eqnarray*}
\end{lemma}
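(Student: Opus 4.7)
The plan is to follow the same diagonalization strategy used in the proofs of Lemmas \ref{lemma:P_AuxS_one} and \ref{lemma:P_AuxS_two}, and then reduce the matrix inequality to a pointwise scalar inequality. First I would note that $S_n = S_n^{(1)}(S_n^{(2)})^{-1}$ is diagonalized by $\Gamma_n \mathbb{F}_n$ via \eqref{eqn:decompositiob_S12}, so $X = S_n\otimes I_m$ admits the decomposition $X = \mathbb{Q}(\Lambda_n\otimes I_m)\mathbb{Q}^*$ with $\mathbb{Q} := \Gamma_n\mathbb{F}_n \otimes \mathbb{U}_m$, while $Y = \tau I_n \otimes (-L_m) = \mathbb{Q}(\tau I_n \otimes \Omega_m)\mathbb{Q}^*$. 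Consequently, the three matrices $\sqrt{X^*X + \alpha^2 I_n\otimes I_m}$, $\sqrt{Y^*Y}$, and $\sqrt{X^*X + Y^*Y + \alpha^2 I_n\otimes I_m}$ are simultaneously unitarily diagonalized by $\mathbb{Q}$, so their sum and quotient remain in this diagonal form.

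Given this simultaneous diagonalization, the spectrum in question is obtained by varying over scalar eigenvalue pairs $(z_1, z_2)$, where $z_1 \in \mathbb{C}$ runs over the eigenvalues of $\Lambda_n \otimes I_m$ and $z_2 \in \mathbb{R}$ runs over those of $\tau I_n \otimes \Omega_m$ (real since $-L_m$ is symmetric positive semi-definite). Setting $a := |z_1|^2 + \alpha^2 > 0$ (strictly positive since $\alpha > 0$) and $b := z_2^2 \geq 0$, each eigenvalue of the preconditioned matrix takes the form
\begin{equation*}
\frac{\sqrt{a + b}}{\sqrt{a} + \sqrt{b}}.
\end{equation*}

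The two bounds then follow from elementary scalar inequalities. The upper bound of $1$ is immediate from $(\sqrt{a} + \sqrt{b})^2 = a + b + 2\sqrt{ab} \geq a + b$, which yields $\sqrt{a+b} \leq \sqrt{a} + \sqrt{b}$. The lower bound of $\tfrac{1}{\sqrt{2}}$ follows from the AM--GM inequality $2\sqrt{ab} \leq a + b$, giving $(\sqrt{a} + \sqrt{b})^2 \leq 2(a + b)$, i.e., $\sqrt{a}+\sqrt{b} \leq \sqrt{2}\sqrt{a+b}$. Combining the two bounds yields the claimed inclusion $\sigma(\cdot) \subseteq [\tfrac{1}{\sqrt{2}}, 1]$.

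I do not anticipate a real obstacle here, since the argument is almost entirely structural once the simultaneous diagonalization by $\mathbb{Q}$ is in place; the only mildly delicate point is confirming that $\sqrt{X^*X + \alpha^2 I_n\otimes I_m} + \sqrt{Y^*Y}$ is invertible, which is ensured by the strict positivity of $\sqrt{a}$ (coming from $\alpha > 0$) on every diagonal entry. As in Remark \ref{remark:unity_for_CN}, the endpoints in the interval are in principle attained only in limiting or degenerate regimes (e.g., when $b=0$ one attains $1$, and the bound $\tfrac{1}{\sqrt{2}}$ is tight as $a/b \to 1$ with $a,b\to\infty$), so the enclosure cannot be tightened without further assumptions.
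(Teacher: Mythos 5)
Your proof is correct and follows essentially the same route as the paper: simultaneous unitary diagonalization of all three square roots by $\mathbb{Q} = \Gamma_n\mathbb{F}_n \otimes \mathbb{U}_m$, followed by the elementary scalar bound $\tfrac{1}{\sqrt{2}}(c_1+c_2) \leq \sqrt{c_1^2+c_2^2} \leq c_1+c_2$ applied entrywise (the paper phrases this via a Rayleigh quotient, but because of the simultaneous diagonalization that is equivalent to your direct reading-off of the eigenvalue ratios). The only cosmetic differences are your substitution $a = c_1^2$, $b = c_2^2$ and the redundant mention of ``positive semi-definite'' where the paper assumes $-L_m$ SPD.
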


\begin{proof}
Similar to the proof of Lemma \ref{lemma:P_AuxS_one}, we let \[\widehat{Z}=\sqrt{X^*X +  \alpha^2 {I}_n \otimes I_m} + \sqrt{Y^*Y}\] and \[\widetilde{Z}=\sqrt{X^*X+Y^*Y +  \alpha^2 {I}_n \otimes I_m }. \] Knowing that the eigendecomposition of $-L_{m}$ is given by $-L_{m} = \mathbb{U}_{m} \Omega_{m} \mathbb{U}_{m}^{\top}$, where $\mathbb{U}_{m}$ is orthogonal and $ \Omega_{m}$ is a real diagonal matrix containing the eigenvalues of $-L_{m}$, we have
\begin{eqnarray*}
    \widehat{Z}&=&\sqrt{X^*X +  \alpha^2 {I}_n \otimes I_m} + \sqrt{Y^*Y} \\
    &=& (\Gamma_n\mathbb{F}_{n}\otimes \mathbb{U}_{m}) \\
    &&\times \big( \sqrt{ (\Lambda_{n}\otimes I_{m})^* (\Lambda_{n}\otimes I_{m})+  \alpha^2 {I}_n \otimes I_m } +  \tau I_{n} \otimes \Omega_{m} \big)\\
    &&\times (\Gamma_n\mathbb{F}_{n}\otimes \mathbb{U}_{m})^{*}
\end{eqnarray*}
and, again, 
\begin{eqnarray*}
    \widetilde{Z}&=&\sqrt{X^*X+Y^*Y +  \alpha^2 {I}_n \otimes I_m }\\
    &=& (\Gamma_n\mathbb{F}_{n}\otimes \mathbb{U}_{m}) \\
    &&\times \sqrt{ (\Lambda_{n}\otimes I_{m})^* (\Lambda_{n}\otimes I_{m})+( \tau I_{n} \otimes \Omega_{m})^{\top}( \tau I_{n} \otimes \Omega_{m}) +  \alpha^2 {I}_n \otimes I_m } \\
    &&\times(\Gamma_n\mathbb{F}_{n}\otimes \mathbb{U}_{m})^{*}.
\end{eqnarray*}
Thus, $\widehat{Z}$ and $\widetilde{Z}$ are simultaneously diagonalized by the unitary matrix $\mathbb{Q}=\Gamma_n\mathbb{F}_{n}\otimes \mathbb{U}_{m}$.

Since both $\widehat{Z}$ and $\widetilde{Z}$ are invertible, they are Hermitian positive definite by construction. To examine the target spectrum of $\widehat{Z}^{-1}\widetilde{Z}$, we consider the Rayleigh quotient for complex $\mathbf{v} \neq \mathbf{0}$:

\begin{eqnarray*}
\widehat{R} &:=\joinrel=\joinrel=\joinrel=\joinrel=\joinrel=& \frac{ \mathbf{v}^* \sqrt{X^*X+Y^*Y +  \alpha^2 {I}_n \otimes I_m } \mathbf{v} }{ \mathbf{v}^*  \big(\sqrt{X^*X +  \alpha^2 {I}_n \otimes I_m} + \sqrt{Y^*Y} \big)   \mathbf{v} }\\
&\underbrace{=\joinrel=\joinrel=\joinrel=\joinrel=\joinrel=}_{{\bf w}=\mathbb{Q}{\bf v}}& \frac{ \mathbf{w}^* \sqrt{ (\Lambda_{n}\otimes I_{m})^* (\Lambda_{n}\otimes I_{m})+( \tau I_{n} \otimes \Omega_{m})^{\top}( \tau I_{n} \otimes \Omega_{m}) +  \alpha^2 {I}_n \otimes I_m }   \mathbf{w} }{ \mathbf{w}^* \big( \sqrt{ (\Lambda_{n}\otimes I_{m})^* (\Lambda_{n}\otimes I_{m})+  \alpha^2 {I}_n \otimes I_m } +  \tau I_{n} \otimes \Omega_{m} \big) \mathbf{w} }
\end{eqnarray*}

For two non-negative numbers, $c_1$ and $c_2$, it is known that
	\begin{align*}
		\frac{1}{\sqrt{2}}(c_1+c_2)\leq\sqrt{c_1^2+c_2^2}\leq c_1+c_2. 
	\end{align*}

Therefore, by letting $c_1$ and $c_2$ be an entry of $\sqrt{ (\Lambda_{n}\otimes I_{m})^* (\Lambda_{n}\otimes I_{m})+  \alpha^2 {I}_n \otimes I_m }$ and $  \tau I_{n} \otimes \Omega_{m}$, respectively, we have
 \begin{equation*}
     \frac{1}{\sqrt{2}}  \leq \widehat{R} \leq 1.
 \end{equation*}

 The proof is complete.
\end{proof}

Similarly, we can show the following lemma and proposition.

\begin{lemma}\label{lemma:P_AuxS_four}
Let $X=S_n\otimes I_m \in \mathbb{C}^{mn\times mn}$ and $Y=\tau  I_n\otimes (-L_m) \in \mathbb{R}^{mn\times mn}$, with the involved notation defined in (\ref{eqn:matrix_S}). Then,
\begin{eqnarray*}
\sigma \bigg( \big(\sqrt{X X^* +  \alpha^2 {I}_n \otimes I_m} + \sqrt{Y Y^*} \big)^{-1} \sqrt{X X^*+ YY^* +  \alpha^2 {I}_n \otimes I_m } \bigg) \subseteq \bigg[ \frac{1}{\sqrt{2}} , 1 \bigg] .
\end{eqnarray*}
\end{lemma}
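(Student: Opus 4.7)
The plan is to reduce Lemma \ref{lemma:P_AuxS_four} to Lemma \ref{lemma:P_AuxS_three} by observing that, for the $X$ and $Y$ at hand, the pairs $\{X^*X,Y^*Y\}$ and $\{XX^*,YY^*\}$ in fact coincide. First, I would note that $S_n=S_n^{(1)}(S_n^{(2)})^{-1}$ is normal: both $S_n^{(1)}$ and $S_n^{(2)}$ are $\omega$-circulant and, per \eqref{eqn:decompositiob_S12}, admit the common unitary diagonalizer $\Gamma_n\mathbb{F}_n$. Hence $S_n=(\Gamma_n\mathbb{F}_n)\,\Lambda_n^{(1)}(\Lambda_n^{(2)})^{-1}\,(\Gamma_n\mathbb{F}_n)^*$ is normal, which gives $S_n^*S_n=S_nS_n^*$ and, taking Kronecker products with $I_m$, $X^*X=XX^*$. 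Since $-L_m$ is symmetric, $Y=Y^*$ and hence $Y^*Y=YY^*$ as well.

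With these identities the matrix
\[
\bigl(\sqrt{XX^*+\alpha^2 I_n\otimes I_m}+\sqrt{YY^*}\bigr)^{-1}\sqrt{XX^*+YY^*+\alpha^2 I_n\otimes I_m}
\]
is literally the same matrix that was treated in Lemma \ref{lemma:P_AuxS_three}, so the spectral inclusion $\sigma(\cdot)\subseteq[\tfrac{1}{\sqrt 2},1]$ is inherited at once.

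If a more explicit proof is desired, the argument mirrors the proof of Lemma \ref{lemma:P_AuxS_three} line by line. I would set $\widehat{Z}=\sqrt{XX^*+\alpha^2 I_n\otimes I_m}+\sqrt{YY^*}$ and $\widetilde{Z}=\sqrt{XX^*+YY^*+\alpha^2 I_n\otimes I_m}$, use $\mathbb{Q}=\Gamma_n\mathbb{F}_n\otimes\mathbb{U}_m$ (with $\mathbb{U}_m$ orthogonally diagonalizing $-L_m$) to simultaneously diagonalize both operators, and then change variables ${\bf w}=\mathbb{Q}{\bf v}$ in the generalized Rayleigh quotient $\widehat{R}=\mathbf v^*\widetilde{Z}\mathbf v/\mathbf v^*\widehat{Z}\mathbf v$ to reduce everything to a scalar quantity of the form $\sqrt{c_1^2+c_2^2}/(c_1+c_2)$. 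The bound then follows from the elementary inequality
\[
\tfrac{1}{\sqrt 2}(c_1+c_2)\leq\sqrt{c_1^2+c_2^2}\leq c_1+c_2,\qquad c_1,c_2\geq 0,
\]
with $c_1$ an entry of $\sqrt{(\Lambda_n\otimes I_m)^*(\Lambda_n\otimes I_m)+\alpha^2 I_n\otimes I_m}$ and $c_2$ an entry of $\tau I_n\otimes \Omega_m$ (both non-negative since $-L_m$ is SPD).

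I do not anticipate a serious obstacle: the normality of $S_n$ (hence of $X$) and the symmetry of $Y$ render the swap $X^*X\leftrightarrow XX^*$, $Y^*Y\leftrightarrow YY^*$ completely harmless, so nothing in the scalar Rayleigh-quotient reduction of Lemma \ref{lemma:P_AuxS_three} needs to be modified.
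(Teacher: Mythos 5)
Your proposal is correct, and the leading observation---that $S_n$ is normal because $S_n^{(1)}$ and $S_n^{(2)}$ share the common unitary diagonalizer $\Gamma_n\mathbb{F}_n$, so $X^*X=XX^*$, while $Y$ is Hermitian so $Y^*Y=YY^*$---is a genuine shortcut the paper does not use. The paper merely asserts that Lemma~\ref{lemma:P_AuxS_four} follows ``similarly'' to Lemma~\ref{lemma:P_AuxS_three}, i.e., by re-running the simultaneous-diagonalization and Rayleigh-quotient argument with the Gram matrices replaced by their transposed counterparts; your fallback argument matches this intent exactly. But your normality observation shows that the matrices appearing in Lemmas~\ref{lemma:P_AuxS_three} and~\ref{lemma:P_AuxS_four} are in fact one and the same, so the second lemma is subsumed by the first at no extra cost. (For the same reason, since $X$ and $Y$ also commute, the matrices of Lemmas~\ref{lemma:P_AuxS_one} and~\ref{lemma:P_AuxS_two} coincide as well.) This is cleaner and more illuminating than what the paper gestures at, although both routes reach the same spectral inclusion.
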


 \begin{proposition}\label{proposition:P_AuxS_second}
Let $\mathcal{P}_{AS} ,\mathcal{P}_{MS} \in \mathbb{C}^{2mn\times 2mn}$ be defined by (\ref{eqn:matrix_P_AuxS}) and (\ref{eqn:matrix_P_MS}), respectively. Then,
\begin{eqnarray*}
    \sigma( \mathcal{P}_{MS}^{-1}\mathcal{P}_{AS} ) \subseteq \bigg[\frac{1}{\sqrt{2}}, 1\bigg].
\end{eqnarray*}
\end{proposition}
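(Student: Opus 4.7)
The plan is to observe that $\mathcal{P}_{MS}^{-1}\mathcal{P}_{AS}$ is block-diagonal and that each of its two diagonal blocks is precisely the quantity whose spectrum is bounded in Lemmas~\ref{lemma:P_AuxS_three} and~\ref{lemma:P_AuxS_four}. Thus the whole proposition reduces to rewriting the blocks of $\mathcal{P}_{MS}$ in the $(X,Y)$ notation.

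First I would expand the blocks of $\mathcal{P}_{MS}$. With $X=S_n\otimes I_m$ one has $X^*X = (S_n^*S_n)\otimes I_m$, and since $S_n^*S_n+\alpha^2 I_n$ is Hermitian positive definite, functional calculus gives $\sqrt{X^*X+\alpha^2 I_n\otimes I_m} = \sqrt{S_n^*S_n+\alpha^2 I_n}\otimes I_m$. Analogously, because $-L_m$ is SPD (so its principal square root equals itself), $\sqrt{Y^*Y} = \sqrt{\tau^2 I_n\otimes L_m^2} = \tau I_n\otimes(-L_m)$. Therefore the $(1,1)$ block of $\mathcal{P}_{MS}$ can be rewritten as $\sqrt{X^*X+\alpha^2 I_n\otimes I_m} + \sqrt{Y^*Y}$, and identically the $(2,2)$ block equals $\sqrt{XX^*+\alpha^2 I_n\otimes I_m} + \sqrt{YY^*}$.

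Next I would note that $\mathcal{P}_{MS}$ and $\mathcal{P}_{AS}$ are block-diagonal, so $\mathcal{P}_{MS}^{-1}\mathcal{P}_{AS}$ is block-diagonal with the two diagonal blocks
\begin{equation*}
\bigl(\sqrt{X^*X+\alpha^2 I_n\otimes I_m}+\sqrt{Y^*Y}\bigr)^{-1}\sqrt{X^*X+Y^*Y+\alpha^2 I_n\otimes I_m}
\end{equation*}
and
\begin{equation*}
\bigl(\sqrt{XX^*+\alpha^2 I_n\otimes I_m}+\sqrt{YY^*}\bigr)^{-1}\sqrt{XX^*+YY^*+\alpha^2 I_n\otimes I_m}.
\end{equation*}
The spectrum of a block-diagonal matrix is the union of the spectra of its diagonal blocks, so $\sigma(\mathcal{P}_{MS}^{-1}\mathcal{P}_{AS})$ is contained in the union of the spectra of these two matrices. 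By Lemmas~\ref{lemma:P_AuxS_three} and~\ref{lemma:P_AuxS_four}, each is contained in $[1/\sqrt{2},1]$, yielding the claim.

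There is no real obstacle here; the only subtlety is the algebraic identification of the $\mathcal{P}_{MS}$ blocks with $\sqrt{X^*X+\alpha^2 I}+\sqrt{Y^*Y}$ (and the $XX^*$ variant). This relies crucially on the SPD assumption on $-L_m$ so that $\sqrt{Y^*Y}=\tau I_n\otimes(-L_m)$, and on the simultaneous diagonalizability of $X^*X$ and $\alpha^2 I$ via the Kronecker structure so that the square root commutes with the Kronecker decomposition. Once these identifications are made, the proposition follows immediately from the two preceding lemmas.
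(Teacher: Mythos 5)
Your proposal is correct and follows essentially the same route as the paper: rewrite the two diagonal blocks of $\mathcal{P}_{MS}$ as $\sqrt{X^*X+\alpha^2 I_n\otimes I_m}+\sqrt{Y^*Y}$ and $\sqrt{XX^*+\alpha^2 I_n\otimes I_m}+\sqrt{YY^*}$, then invoke Lemmas~\ref{lemma:P_AuxS_three} and~\ref{lemma:P_AuxS_four} block by block. You are a bit more explicit than the paper about why the Kronecker and SPD structure let you pull the square roots apart, but the underlying argument is identical.
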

\begin{proof}
Knowing that
\[
\mathcal{P}_{MS} = \mat{ \sqrt{X^*X +  \alpha^2 {I}_n \otimes I_m} + \sqrt{Y^*Y} &\\& \sqrt{X X^* +  \alpha^2 {I}_n \otimes I_m} + \sqrt{Y Y^*} }
\]
and
\[
\mathcal{P}_{AS} = \mat{ \sqrt{X^* X + Y^* Y +  \alpha^2 {I}_n \otimes I_m } &\\&\sqrt{XX^*+YY^* +  \alpha^2 {I}_n \otimes I_m } },
\]
we know that $\sigma( \mathcal{P}_{MS}^{-1}\mathcal{P}_{AS} )  \subseteq \big[\frac{1}{\sqrt{2}}, 1 \big],$ by Lemmas \ref{lemma:P_AuxS_three} and \ref{lemma:P_AuxS_four}.
\end{proof}

Now, we are ready to show that $\mathcal{P}_{MS}$ and $\mathcal{P}_{S}$ are spectrally equivalent in the following theorem, which explains the effectiveness of $\mathcal{P}_{MS}$.

\begin{theorem}\label{theorem:P_AuxS}
Let $|\mathcal{P}_{S}| ,\mathcal{P}_{MS} \in \mathbb{C}^{2mn\times 2mn}$ be defined by (\ref{eqn:matrix_P}) and (\ref{eqn:matrix_P_MS}), respectively. Then,
\begin{eqnarray*}
    \sigma( \mathcal{P}_{MS}^{-1}|\mathcal{P}_{S}| ) \subseteq \bigg[\frac{1}{\sqrt{2}}, \sqrt{2}\bigg].
\end{eqnarray*}
\end{theorem}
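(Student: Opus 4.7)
The plan is to reduce Theorem \ref{theorem:P_AuxS} to the two propositions already proved, \ref{proposition:P_AuxS} and \ref{proposition:P_AuxS_second}, via the telescoping factorization
\[
\mathcal{P}_{MS}^{-1}|\mathcal{P}_{S}|
=\bigl(\mathcal{P}_{MS}^{-1}\mathcal{P}_{AS}\bigr)\bigl(\mathcal{P}_{AS}^{-1}|\mathcal{P}_{S}|\bigr).
\]
A bound on the spectrum of a product of two matrices is not in general the product of the spectral bounds, so the heart of the argument is to show that all three HPD matrices $|\mathcal{P}_S|,\mathcal{P}_{AS},\mathcal{P}_{MS}$ are simultaneously diagonalized by the same unitary. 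Once that is established, the eigenvalues of the product coincide with the pointwise product of the corresponding eigenvalues, and the claimed interval $[1/\sqrt{2},\sqrt{2}]$ follows immediately by multiplying the interval $[1/\sqrt{2},1]$ from Proposition \ref{proposition:P_AuxS_second} with $[1,\sqrt{2}]$ from Proposition \ref{proposition:P_AuxS}.

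The simultaneous diagonalization is the concrete step I would verify. From the proofs of Lemmas \ref{lemma:P_AuxS_one}--\ref{lemma:P_AuxS_four} it is already visible that both the $(1,1)$ and $(2,2)$ blocks of $|\mathcal{P}_S|,\mathcal{P}_{AS},\mathcal{P}_{MS}$ are diagonalized by the unitary $\mathbb{Q}=\Gamma_n\mathbb{F}_n\otimes\mathbb{U}_m$: the $S_n$ part is handled by the $\omega$-circulant decomposition $S_n^{(k)}=(\Gamma_n\mathbb{F}_n)\Lambda_n^{(k)}(\Gamma_n\mathbb{F}_n)^{*}$, and the $-L_m$ part by the orthogonal eigendecomposition $-L_m=\mathbb{U}_m\Omega_m\mathbb{U}_m^{\top}$. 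Therefore, setting $\widehat{\mathbb{Q}}=\mathrm{diag}(\mathbb{Q},\mathbb{Q})$, each of $|\mathcal{P}_S|$, $\mathcal{P}_{AS}$, and $\mathcal{P}_{MS}$ can be written as $\widehat{\mathbb{Q}}D_i\widehat{\mathbb{Q}}^{*}$ with a common unitary $\widehat{\mathbb{Q}}$ and a nonnegative diagonal matrix $D_i$, $i=1,2,3$, respectively.

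With this joint diagonalization in hand, I would write
\[
\mathcal{P}_{MS}^{-1}|\mathcal{P}_{S}|=\widehat{\mathbb{Q}}\,D_3^{-1}D_1\,\widehat{\mathbb{Q}}^{*}=\widehat{\mathbb{Q}}\bigl(D_3^{-1}D_2\bigr)\bigl(D_2^{-1}D_1\bigr)\widehat{\mathbb{Q}}^{*},
\]
so that the eigenvalues of $\mathcal{P}_{MS}^{-1}|\mathcal{P}_S|$ are precisely the entrywise products of the diagonal entries of $D_3^{-1}D_2$ (which are the eigenvalues of $\mathcal{P}_{MS}^{-1}\mathcal{P}_{AS}$) and $D_2^{-1}D_1$ (which are the eigenvalues of $\mathcal{P}_{AS}^{-1}|\mathcal{P}_S|$). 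Invoking Propositions \ref{proposition:P_AuxS} and \ref{proposition:P_AuxS_second} then yields $\sigma(\mathcal{P}_{MS}^{-1}|\mathcal{P}_S|)\subseteq[1/\sqrt{2},\sqrt{2}]$.

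The main obstacle is really only bookkeeping: making sure the block-by-block simultaneous diagonalization is valid despite the square roots and sums of square roots appearing in the three matrices. This is not a difficulty provided one observes that each diagonal block inside $|\mathcal{P}_S|$, $\mathcal{P}_{AS}$, and $\mathcal{P}_{MS}$ is a continuous function (square root, sum of square roots) of the two commuting normal operators $X=S_n\otimes I_m$ (or its adjoint) and $Y=\tau I_n\otimes(-L_m)$; commuting normal operators admit a common spectral resolution, and the resolution is exactly the one furnished by $\mathbb{Q}$ in the proofs of Lemmas \ref{lemma:P_AuxS_one}--\ref{lemma:P_AuxS_four}. Once this is noted, the theorem follows without further calculation.
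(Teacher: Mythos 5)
Your proof is correct, but it takes a heavier route than the paper's. The paper's proof is a one-line telescoping of generalized Rayleigh quotients: for any nonzero $\mathbf{v}$,
\[
\frac{\mathbf{v}^*|\mathcal{P}_S|\mathbf{v}}{\mathbf{v}^*\mathcal{P}_{MS}\mathbf{v}}
=\frac{\mathbf{v}^*|\mathcal{P}_S|\mathbf{v}}{\mathbf{v}^*\mathcal{P}_{AS}\mathbf{v}}
\cdot\frac{\mathbf{v}^*\mathcal{P}_{AS}\mathbf{v}}{\mathbf{v}^*\mathcal{P}_{MS}\mathbf{v}},
\]
where the first factor lies in $[1,\sqrt{2}]$ and the second in $[1/\sqrt{2},1]$ by Propositions \ref{proposition:P_AuxS} and \ref{proposition:P_AuxS_second}, since for HPD pencils the spectrum of $A^{-1}B$ coincides with the range of $\mathbf{v}^*B\mathbf{v}/\mathbf{v}^*A\mathbf{v}$. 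This argument is purely variational and does not require knowing the three preconditioners share a common eigenbasis. Your opening remark, that ``the heart of the argument is to show that all three HPD matrices are simultaneously diagonalized,'' therefore overstates what is needed: the Rayleigh-quotient telescoping already closes the gap you rightly flag (that spectral bounds of a product of non-commuting matrices are not multiplicative), without any structural hypothesis on $|\mathcal{P}_S|$, $\mathcal{P}_{AS}$, $\mathcal{P}_{MS}$ beyond positive definiteness. That said, your simultaneous-diagonalizability claim is in fact true here (all six blocks are continuous functions of $S_n^*S_n=S_nS_n^*$, $S_n+S_n^*$, and $-L_m$, all diagonalized by $\mathbb{Q}=\Gamma_n\mathbb{F}_n\otimes\mathbb{U}_m$), and your argument via the joint diagonalization gives a slightly finer conclusion—each eigenvalue of $\mathcal{P}_{MS}^{-1}|\mathcal{P}_S|$ is literally the product of the matching pair of eigenvalues from the two factor pencils—so it is a legitimate alternative, just more work than the paper spends and less robust to perturbations that would destroy the common eigenbasis.
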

\begin{proof}
Notice that for complex $\mathbf{v} \neq \mathbf{0}$,
 \begin{eqnarray*}
  \frac{ \mathbf{v}^* |\mathcal{P}_{S}| \mathbf{v} }{ \mathbf{v}^* \mathcal{P}_{MS} \mathbf{v} } = \frac{ \mathbf{v}^* |\mathcal{P}_{S}| \mathbf{v} }{ \mathbf{v}^* \mathcal{P}_{AS} \mathbf{v} } \cdot \frac{ \mathbf{v}^* \mathcal{P}_{AS} \mathbf{v} }{ \mathbf{v}^* \mathcal{P}_{MS} \mathbf{v} }.
 \end{eqnarray*}
 
 By Propositions \ref{proposition:P_AuxS} and \ref{proposition:P_AuxS_second}, we have
 \[
\frac{1}{\sqrt{2}}\leq \frac{ \mathbf{v}^* |\mathcal{P}_{S}| \mathbf{v} }{ \mathbf{v}^* \mathcal{P}_{MS} \mathbf{v} } \leq \sqrt{2}.
 \]
  The proof is complete.
\end{proof}

\begin{remark}\label{remark:P_AuxS}
   From Remark \ref{remark:unity_for_CN}, we know that $\sigma( \mathcal{P}_{MS}^{-1}|\mathcal{P}_{S}| ) \subseteq [\frac{1}{\sqrt{2}}, 1]$ when $\theta=\frac{1}{2}$.
\end{remark}

In what follows, we will justify the preconditioning effectiveness of $\mathcal{P}_{MS}$ for $\mathcal{A}$.

\begin{lemma}\cite[Theorem 4.5.9 (Ostrowski)]{horn_johnson_1990}\label{lemma:Ostrowski}
        Let $A_m,W_m$ be $m \times m$ matrices.  Suppose $A_m$ is Hermitian and $W_m$ is nonsingular. Let the eigenvalues of $A_m$ and $W_mW_m^*$ be arranged in an increasing order. For each $k=1,2,\dots,m,$ there exists a positive real number $\theta_k$ such that $\lambda_1(W_mW_m^*) \leq \theta_k \leq \lambda_m(W_mW_m^*)$ and
    \begin{equation*}
        \lambda_k(W_mA_mW_m^*) = \theta_k \lambda_k(A_m).
    \end{equation*}
\end{lemma}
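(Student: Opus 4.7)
The plan is to establish this classical result of Ostrowski in three movements: a reduction via polar decomposition, a sign/inertia argument to handle degenerate cases, and a Courant--Fischer calculation to pin down the ratio $\theta_k$.

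First, I would reduce the statement to the case where $W$ is Hermitian positive definite. Writing the polar decomposition $W = QP$ with $Q$ unitary and $P = (W^*W)^{1/2}$ Hermitian positive definite, one has $WAW^* = Q(PAP)Q^*$, which is unitarily similar to $PAP$, and $WW^* = QP^2Q^*$, which has the same spectrum as $P^2$. Hence it suffices to prove the statement with $W$ replaced by $P$ and $WW^*$ replaced by $P^2$.

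Second, I would invoke Sylvester's law of inertia: because $P$ is nonsingular, $PAP$ and $A$ share the same inertia (numbers of positive, negative, and zero eigenvalues). When the spectra are arranged in increasing order, this forces $\lambda_k(A)$ and $\lambda_k(PAP)$ to have the same sign for every $k$. In particular, if $\lambda_k(A) = 0$, then $\lambda_k(PAP) = 0$ as well, and one may take $\theta_k$ to be any number in $[\lambda_1(P^2),\lambda_m(P^2)]$. The remaining work is to show that when $\lambda_k(A) \neq 0$, the ratio $\theta_k := \lambda_k(PAP)/\lambda_k(A)$ lies in the target interval.

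Third, I would use the Courant--Fischer characterization combined with the change of variable $y = Px$. Since $x \mapsto Px$ is a bijection on $\mathbb{C}^m$, one obtains
\[
\lambda_k(PAP) \;=\; \min_{\dim V = m-k+1}\ \max_{0\neq y\in V}\ \frac{y^*Ay}{y^*P^{-2}y},
\]
and the dual max--min form as well. Since $\lambda_1(P^2) \le \tfrac{y^*y}{y^*P^{-2}y} \le \lambda_m(P^2)$, a sign-dependent estimate on the Rayleigh quotient $y^*Ay/y^*y$ gives, when $\lambda_k(A) > 0$, both $\lambda_k(PAP) \le \lambda_m(P^2)\lambda_k(A)$ (using the witness subspace for the min in $\lambda_k(A)$) and $\lambda_k(PAP) \ge \lambda_1(P^2)\lambda_k(A)$ (using the preimage under $P$ of the max--min witness for $\lambda_k(A)$). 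The case $\lambda_k(A) < 0$ follows by applying the same argument to $-A$. In every case, $\theta_k \in [\lambda_1(P^2),\lambda_m(P^2)] = [\lambda_1(WW^*),\lambda_m(WW^*)]$, as desired.

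The main obstacle is the two-sided ratio bound in the third step: the Rayleigh factor $y^*y / y^*P^{-2}y$ has different monotonicity effects according to whether $y^*Ay$ is positive or negative, so one must choose the witness subspace carefully so that within it the sign of $y^*Ay$ is controlled (via the Courant--Fischer optimizer for $\lambda_k(A)$) before pulling the bound on the $P$-weighted factor out. Once the sign is pinned down on the chosen subspace, the Rayleigh ratio bound applies uniformly and the estimate passes through the min (or max) unchanged.
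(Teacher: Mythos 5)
The paper does not prove this lemma; it is invoked directly as a textbook result from Horn \& Johnson, so there is no in-house argument to compare against. Your proof essentially reconstructs the standard Courant--Fischer/Rayleigh-quotient argument that appears in that reference: write the transformed Rayleigh quotient as the product of $y^*Ay/(y^*y)$ and a factor confined to $[\lambda_1(W_mW_m^*),\lambda_m(W_mW_m^*)]$, and use eigenvector witness subspaces of $A_m$ together with a sign analysis. The polar-decomposition reduction is a clean preliminary step (Horn \& Johnson work directly with $W_m$ nonsingular, but there is no loss either way), and the Sylvester inertia argument correctly disposes of the zero-eigenvalue case.

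One slip should be fixed: in the displayed Courant--Fischer formula you write
\[
\lambda_k(PAP)=\min_{\dim V=m-k+1}\max_{0\neq y\in V}\frac{y^*Ay}{y^*P^{-2}y},
\]
but with eigenvalues in increasing order the min--max form uses $\dim V=k$, and the $\dim V=m-k+1$ variant is the max--min form. Your subsequent reasoning (``witness for the min in $\lambda_k(A)$'' is a $k$-dimensional subspace; the max--min witness is $(m-k+1)$-dimensional) clearly uses the correct pairings, so this reads as a typo rather than a logical gap, but the displayed line is wrong as stated. Also, once you have changed variables to $y=Px$, the witness subspaces for $\lambda_k(A)$ enter directly as subspaces of $y$-space; the phrase ``preimage under $P$'' is a leftover from the $x$-picture and can be dropped to avoid confusion.
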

As a consequence of Theorem \ref{theorem:P_AuxS}, Lemma \ref{lemma:Ostrowski}, and \cite[Corollary 3]{BRANDTS20103100}, we can show the following corollary accounting for the preconditioning effectiveness of $\mathcal{P}_{MS}$:

\begin{corollary}
Let $\mathcal{A} \in \mathbb{R}^{2mn \times 2mn} ,\mathcal{P}_{MS} \in \mathbb{C}^{2mn \times 2mn} $ be defined by (\ref{eqn:matrix_A}) and (\ref{eqn:matrix_P_MS}), respectively. Then, the eigenvalues of the matrix $\mathcal{P}_{MS}^{-1} \mathcal{A}$ are contained $[-\sqrt{2}, -\frac{1}{\sqrt{2}}] \cup [\frac{1}{\sqrt{2}}, \sqrt{2}]$, with a number of outliers independent of $n$ in general (i.e., depending only on $m$). 
\end{corollary}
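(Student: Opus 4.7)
The plan is to derive the corollary as a direct consequence of Theorem \ref{theorem:P_AuxS}, Theorem \ref{thm:strang_preconditioner}, and Ostrowski's theorem (Lemma \ref{lemma:Ostrowski}). First, I would observe that $\mathcal{P}_{MS}$ and $|\mathcal{P}_{S}|$ are both Hermitian positive definite by construction (each diagonal block is a sum of an HPD matrix square root and $\tau I_n\otimes(-L_m)$, which is HPD). Consequently, $\mathcal{P}_{MS}^{-1}\mathcal{A}$ is similar to the Hermitian matrix $\mathcal{P}_{MS}^{-1/2}\mathcal{A}\,\mathcal{P}_{MS}^{-1/2}$ and hence has real eigenvalues. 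I would then rewrite
\[
\mathcal{P}_{MS}^{-1/2}\mathcal{A}\,\mathcal{P}_{MS}^{-1/2} \;=\; W\,H\,W^{*},
\qquad
W := \mathcal{P}_{MS}^{-1/2}|\mathcal{P}_{S}|^{1/2},
\quad
H := |\mathcal{P}_{S}|^{-1/2}\mathcal{A}\,|\mathcal{P}_{S}|^{-1/2},
\]
so that the spectrum of $\mathcal{P}_{MS}^{-1}\mathcal{A}$ coincides with that of $W H W^{*}$, with $H$ Hermitian.

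Next, I would control the spectrum of $H$. Since $H$ is similar to $|\mathcal{P}_{S}|^{-1}\mathcal{A}$, Theorem \ref{thm:strang_preconditioner} gives $|\mathcal{P}_{S}|^{-1}\mathcal{A} = \widetilde{\mathcal{Q}}_{1} + \widetilde{\mathcal{R}}_{2}$ with $\widetilde{\mathcal{Q}}_{1}$ Hermitian and unitary and $\mathrm{rank}(\widetilde{\mathcal{R}}_{2}) \le 4m$. Invoking \cite[Corollary 3]{BRANDTS20103100}, the eigenvalues of $H$ are all equal to $\pm 1$ except for a number of outliers that is bounded by a constant multiple of $m$, independent of $n$. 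In parallel, $WW^{*} = \mathcal{P}_{MS}^{-1/2}|\mathcal{P}_{S}|\mathcal{P}_{MS}^{-1/2}$ is similar to $\mathcal{P}_{MS}^{-1}|\mathcal{P}_{S}|$, so Theorem \ref{theorem:P_AuxS} yields $\sigma(WW^{*}) \subseteq [\tfrac{1}{\sqrt{2}},\sqrt{2}]$.

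Applying Lemma \ref{lemma:Ostrowski} with $A_m = H$ and $W_m = W$, for each index $k$ there exists $\theta_k \in [\lambda_{\min}(WW^{*}),\lambda_{\max}(WW^{*})] \subseteq [\tfrac{1}{\sqrt{2}},\sqrt{2}]$ such that $\lambda_k(WHW^{*}) = \theta_k\,\lambda_k(H)$. For each index $k$ with $\lambda_k(H) = +1$, this gives $\lambda_k(\mathcal{P}_{MS}^{-1}\mathcal{A}) \in [\tfrac{1}{\sqrt{2}},\sqrt{2}]$; for each index with $\lambda_k(H) = -1$, it gives $\lambda_k(\mathcal{P}_{MS}^{-1}\mathcal{A}) \in [-\sqrt{2},-\tfrac{1}{\sqrt{2}}]$. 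Since $\theta_k>0$, the sign of each eigenvalue is preserved. The only indices $k$ that may produce eigenvalues of $\mathcal{P}_{MS}^{-1}\mathcal{A}$ outside $[-\sqrt{2},-\tfrac{1}{\sqrt{2}}]\cup[\tfrac{1}{\sqrt{2}},\sqrt{2}]$ are those for which $\lambda_k(H)$ is itself an outlier, and the count of these is bounded independent of $n$.

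The main obstacle I anticipate is the careful bookkeeping needed to translate the rank-$4m$ low-rank correction from Theorem \ref{thm:strang_preconditioner} into a quantitative outlier count for the Hermitian matrix $H$ via \cite[Corollary 3]{BRANDTS20103100}, and then to ensure that the monotone indexing in Ostrowski's theorem aligns correctly, so that non-outlier eigenvalues of $H$ are multiplied by the admissible factors $\theta_k$ and cannot leak across signs. Once this alignment is established, the conclusion follows without further effort.
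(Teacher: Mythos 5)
Your proposal is correct and follows exactly the paper's own route: the same factorization $\mathcal{P}_{MS}^{-1/2}\mathcal{A}\,\mathcal{P}_{MS}^{-1/2} = W H W^{*}$ with $W=\mathcal{P}_{MS}^{-1/2}|\mathcal{P}_{S}|^{1/2}$ and $H=|\mathcal{P}_{S}|^{-1/2}\mathcal{A}|\mathcal{P}_{S}|^{-1/2}$, the same use of Theorem~\ref{theorem:P_AuxS} to bound $\sigma(WW^{*})$, the same invocation of Ostrowski's theorem (Lemma~\ref{lemma:Ostrowski}), and the same appeal to Theorem~\ref{thm:strang_preconditioner} together with \cite[Corollary 3]{BRANDTS20103100} to control the spectrum of $H$. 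The indexing concern you raise at the end is glossed over in the paper as well, and is not a substantive gap.
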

\begin{proof}
    Note that
    \begin{eqnarray*}
    \mathcal{P}_{MS}^{-1/2} \mathcal{A} \mathcal{P}_{MS}^{-1/2} = \mathcal{P}_{MS}^{-1/2} |\mathcal{P}_{S}|^{1/2} |\mathcal{P}_{S}|^{-1/2} \mathcal{A} |\mathcal{P}_{S}|^{-1/2} |\mathcal{P}_{S}|^{1/2} \mathcal{P}_{MS}^{-1/2}.
    \end{eqnarray*} 
    From Lemma \ref{lemma:Ostrowski} and Theorem \ref{theorem:P_AuxS}, we know that, for each $k=1,2,\dots, 2mn$, there exists a positive real number $\theta_k$ such that \[
    \frac{1}{\sqrt{2}}\leq\lambda_{\min}(\mathcal{P}_{MS}^{-1/2} |\mathcal{P}_{S}| \mathcal{P}_{MS}^{-1/2}) \leq \theta_k \leq \lambda_{\max}(\mathcal{P}_{MS}^{-1/2} |\mathcal{P}_{S}| \mathcal{P}_{MS}^{-1/2}) \leq \sqrt{2}
    \] 
    and
    \begin{equation*}
        \lambda_k(\mathcal{P}_{MS}^{-1/2} \mathcal{A} \mathcal{P}_{MS}^{-1/2}) = \theta_k \lambda_k(|\mathcal{P}_{S}|^{-1/2} \mathcal{A} |\mathcal{P}_{S}|^{-1/2}).
    \end{equation*}
    Recalling from Theorem \ref{thm:strang_preconditioner} and \cite[Corollary 3]{BRANDTS20103100} that $\lambda_k(|\mathcal{P}_{S}|^{-1/2} \mathcal{A} |\mathcal{P}_{S}|^{-1/2})$ are either $\pm 1$ except for a number of outliers independent of $n$ in general, the proof is complete.
\end{proof}

\begin{remark}
    When the Crank-Nicolson method is used, we can show that the eigenvalues of the matrix $\mathcal{P}_{MS}^{-1} \mathcal{A}$ are contained $[-1, -\frac{1}{\sqrt{2}}] \cup [\frac{1}{\sqrt{2}}, 1]$, with a number of outliers independent of $n$ in general.
\end{remark}
In light of the last corollary, we can expect that MINRES for $\mathcal{A}$ will converge rapidly in exact arithmetic with $\mathcal{P}_{MS}$ as the preconditioner.

\subsection{Implementation}\label{sub:implementation}
We begin by discussing the computation of $\mathcal{\widehat{A}}\mathbf{v}$ (and $\mathcal{A}\mathbf{v}$) for any given vector $\mathbf{v}$. The computation of matrix-vector product $\mathcal{\widehat{A}}\mathbf{v}$ can be computed in $\mathcal{O}(mn\log{n})$ operations by using fast Fourier transforms, because of the fact that $\mathcal{\widehat{A}}$ contains two block (dense) Toeplitz matrices. The required storage is of $\mathcal{O}(mn)$. In the special case when $\theta=1$ for instance, the product $\mathcal{\widehat{A}}\mathbf{v}$ requires only linear complexity of $\mathcal{O}(mn)$ since $\mathcal{A}$ is a sparse matrix with a simple bi-diagonal Toeplitz matrix $B_n$.

In each GMRES iteration, the matrix-vector product $\mathcal{P}_{S}^{-1}\mathbf{v}$ for a given vector $\mathbf{v}$ needs to be computed. Since $\omega$-circulant matrices are diagonalizable by the product of a diagonal matrix and a discrete Fourier matrix $\mathbb{F}_{n} =\frac{1}{\sqrt{n}}[\theta_{n}^{(i-1)(j-1)}]_{i,j=1}^{n}\in \mathbb{C}^{n\times n}$ with $\theta_{n} = \exp{(\frac{2\pi \mathbf{i}}{n})}$, we can represent the matrix $S_n$ defined by (\ref{eqn:matrix_S}) using eigendecomposition $S_n=\Gamma_n\mathbb{F}_{n}\Lambda_{n} \mathbb{F}_{n}^{*}\Gamma_n^*$. Note that $\Lambda_{n}$ is a diagonal matrix.

Hence, we can decompose $\mathcal{P}_{S}$ from (\ref{eqn:matrix_P_GMRES}) as follows:
\begin{align*}
    \mathcal{P}_{S} &= \mat{\mathcal{S}  & -\alpha {I}_n \otimes I_m \\
        \alpha {I}_n \otimes I_m  & \mathcal{S}^*}\\
                    &= \widetilde{\mathcal{U}} \left( \underbrace{\mat{\Lambda_{n} & -\alpha I_{n}\\ \alpha I_{n}&\Lambda_{n}^{*}}}_{\mathcal{G}} \otimes I_{m} + \tau \mat{I_{n}&\\&I_{n}}\otimes (-L_{m})\right) \widetilde{\mathcal{U}}^{*},
\end{align*}
where $\widetilde{\mathcal{U}}=\mat{\Gamma_n\mathbb{F}_{n}\otimes I_{m}&\\&(\Gamma_n\mathbb{F}_{n}\otimes I_m)^{*}}$ is an unitary matrix. Note that the matrix $\mathcal{G}$ can be further decomposed using the following Lemma in \cite{WuZhou2020}.
\begin{lemma}\label{lemma:four_diag_decomposition}(\cite{WuZhou2020}, Lemma 2.3)
    Let $G_{1,2,3,4} \in \mathbb{C}^{n\times n}$ be four diagonal matrices and $\mathbf{G}=\mat{G_{1}&G_{2}\\G_{3}&G_{4}}$. Suppose $G_{2}$ and $G_{3}$ are invertible. Then, it holds that
    \begin{align*}
        \mathbf{G} = \mathbf{W} \mat{G_{1}+G_{2}M_{1}&\\&G_{4}+G_{3}M_{2}} \mathbf{W}^{-1}, \quad \mathbf{W} = \mat{I_{n}&M_{2}\\M_{1}&I_{n}},
    \end{align*}
    provided $\mathbf{W}$ is invertible, where $I_{n}  \in \mathbb{R}^{n\times n} $ is the identity matrix and
    \begin{align*}
        M_{1}&=\frac{1}{2} G_{2}^{-1} \left( G_{4}-G_{1}+\sqrt{(G_{4}-G_{1})^2+4G_{2}G_{3}}\right),\\
        M_{2}&=\frac{-1}{2} G_{3}^{-1} \left( G_{4}-G_{1}+\sqrt{(G_{4}-G_{1})^2+4G_{2}G_{3}}\right).
    \end{align*}
\end{lemma}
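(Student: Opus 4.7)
The plan is to verify the asserted factorization by establishing the equivalent identity $\mathbf{G}\mathbf{W} = \mathbf{W}\,\mathrm{diag}(D_1, D_2)$, where $D_1 := G_1 + G_2 M_1$ and $D_2 := G_4 + G_3 M_2$. Once this is in hand, the assumed invertibility of $\mathbf{W}$ immediately delivers the desired similarity $\mathbf{G} = \mathbf{W}\,\mathrm{diag}(D_1, D_2)\,\mathbf{W}^{-1}$.

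First I would multiply out both sides in $2\times 2$ block form,
\begin{equation*}
\mathbf{G}\mathbf{W} = \begin{bmatrix} G_1 + G_2 M_1 & G_1 M_2 + G_2 \\ G_3 + G_4 M_1 & G_3 M_2 + G_4 \end{bmatrix}, \qquad \mathbf{W}\,\mathrm{diag}(D_1, D_2) = \begin{bmatrix} D_1 & M_2 D_2 \\ M_1 D_1 & D_2 \end{bmatrix}.
\end{equation*}
The $(1,1)$ and $(2,2)$ blocks coincide by definition of $D_1$ and $D_2$. Because $G_1, G_2, G_3, G_4$ and the candidates $M_1, M_2$ are all diagonal of the same size (hence pairwise commuting), the $(1,2)$ and $(2,1)$ block equalities reduce respectively to the two (independent) quadratic identities
\begin{equation*}
G_3 M_2^2 + (G_4 - G_1) M_2 - G_2 = 0, \qquad G_2 M_1^2 - (G_4 - G_1) M_1 - G_3 = 0.
\end{equation*}

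The second step is to apply the quadratic formula entrywise along the diagonal. Writing $D := \sqrt{(G_4-G_1)^2 + 4 G_2 G_3}$ and using $D^2 = (G_4-G_1)^2 + 4 G_2 G_3$, the solution branches are
\begin{equation*}
M_2 = \frac{-(G_4-G_1) \pm D}{2 G_3}, \qquad M_1 = \frac{(G_4-G_1) \pm D}{2 G_2},
\end{equation*}
and the expressions in the statement correspond to the ``$-$'' branch for $M_2$ and the ``$+$'' branch for $M_1$. A short substitution --- expanding $G_3 M_2^2$ and $G_2 M_1^2$ and collapsing with the identity for $D^2$ --- confirms both prescribed formulas indeed satisfy their respective quadratics.

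The proof is essentially routine algebra once the block product has been written out, and there is no deep obstacle. The principal points requiring care are: (i) the diagonal structure of all $G_i$ and $M_i$, whose simultaneous diagonalizability is what permits the reduction to scalar quadratics on each diagonal position; (ii) the nonvanishing of $G_2$ and $G_3$ assumed in the hypotheses, which makes the closed-form expressions well-defined; and (iii) fixing a consistent branch of the diagonal square root $D$ so that a single matrix $D$ can be used in both formulas, which is the only mild technical subtlety in the otherwise direct verification.
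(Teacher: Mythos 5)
The paper does not prove this lemma; it is stated as a direct citation of Lemma 2.3 of \cite{WuZhou2020}. Your verification is correct and self-contained: you reduce the claim to the equivalent identity $\mathbf{G}\mathbf{W}=\mathbf{W}\,\mathrm{diag}(D_1,D_2)$, note that the diagonal blocks agree by definition of $D_1=G_1+G_2M_1$ and $D_2=G_4+G_3M_2$, use the commutativity of the diagonal factors to collapse the off-diagonal block equalities into the quadratics $G_3 M_2^2 + (G_4-G_1)M_2 - G_2 = 0$ and $G_2 M_1^2 - (G_4-G_1)M_1 - G_3 = 0$, and correctly identify the prescribed $M_1$ and $M_2$ as the $+$ and $-$ roots respectively, written with a single fixed branch of $D=\sqrt{(G_4-G_1)^2+4G_2G_3}$; the invertibility of $G_2,G_3$ makes these formulas well-defined, and the assumed invertibility of $\mathbf{W}$ converts the intertwining relation into the stated similarity. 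There is no gap.
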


Applying Lemma \ref{lemma:four_diag_decomposition} to the matrix $\mathcal{G} = \mathcal{WDW}^{-1}$, we can further decompose $\mathcal{P}_{S}$ as follows:

\begin{align*}
    \mathcal{P}_{S} &= \widetilde{\mathcal{U}} \left( \mathcal{WDW}^{-1} \otimes I_{m} + \tau \mat{I_{n}&\\& I_{n}}\otimes (-L_{m}) \right) \widetilde{\mathcal{U}}^{*}\\
                    &= \mathcal{V} \left( \mathcal{D} \otimes I_{m} + \tau \mat{I_{n}&\\& I_{n}}\otimes (-L_{m}) \right)  \mathcal{V}^{*},\\
                    &= \mathcal{V} \mat{D_{1}\otimes I_{m} + \tau I_{n}\otimes (-L_{m})&\\&D_{2}\otimes I_{m}+ \tau I_{n}\otimes (-L_{m})}  \mathcal{V}^{*},
\end{align*}
where $\mathcal{V}=\widetilde{\mathcal{U}} \left( \mathcal{W}\otimes I_{m} \right)$, and the matrices $\mathcal{W}$  and $\mathcal{D} = \mat{D_{1}&\\&D_{2}}$ are explicitly known from Lemma \ref{lemma:four_diag_decomposition}.

Therefore, the computation of $\mathbf{w} = \mathcal{P}_{S}^{-1}\mathbf{v}$ can be implemented by the following three steps.

\begin{enumerate}[1.]
    \item $\textrm{Compute}~\widetilde{\mathbf{v}} = \mathcal{V}^{*}\mathbf{\mathbf{v}}$,
    \item $\textrm{Compute}~\widetilde{\mathbf{w}} =\mat{D_{1}\otimes I_{m} + \tau I_{n}\otimes (-L_{m})&\\&D_{2}\otimes I_{m}+ \tau I_{n}\otimes (-L_{m})}^{-1}\widetilde{\mathbf{v}}$,
    \item $\textrm{Compute}~\mathbf{w} = \mathcal{V}\widetilde{\mathbf{w}}$.
\end{enumerate}

Both Steps 1 and 3 can be computed by fast Fourier transformation in $\mathcal{O}(mn\log{n})$. In Step 2, the shifted Laplacian systems can be efficiently solved for instance by using the multigrid method. A detailed description of this highly effective implementation can be found in \cite{HeLiu2022} for example. 

In each MINRES iteration, we need to compute a matrix-vector product in the form of $|\mathcal{P}_{S}|^{-1}\mathbf{v}$ for some given vector $\mathbf{v}$. The eigendecomposition of $-L_{m}$ is given by $-L_{m} = \mathbb{U}_{m} \Omega_{m} \mathbb{U}_{m}^{\top}$ with $-L_m$ assumed SPD, where $\mathbb{U}_{m}$ is orthogonal and $ \Omega_{m}$ is a diagonal matrix containing the eigenvalues of $-L_{m}$.

Hence, we can rewrite $|\mathcal{P}_{S}|$ from (\ref{eqn:matrix_P}) as follows:

\begin{align*}
   |\mathcal{P}_{S}| &= \mat{\sqrt{\mathcal{S}^* \mathcal{S} +  \alpha^2 {I}_n \otimes I_m}&\\&\sqrt{\mathcal{S}^* \mathcal{S} +  \alpha^2 {I}_n \otimes I_m}}\\
    &= \mathcal{U} \mat{\sqrt{|\Lambda|^2+\alpha^2 {I}_n \otimes I_m}&\\&\sqrt{|\Lambda|^2+\alpha^2 {I}_n \otimes I_m}} \mathcal{U}^{*}, 
\end{align*}
where $\mathcal{U} := \mat{\Gamma_n\mathbb{F}_{n}\otimes \mathbb{U}_{m}&\\&(\Gamma_n\mathbb{F}_{n}\otimes \mathbb{U}_{m})^{*}}$ is an unitary matrix and $\Lambda := \Lambda_{n}\otimes I_{m} + \tau I_{n} \otimes \Omega_{m}$.

Therefore, the computation of $\mathbf{w} = |\mathcal{P}_{S}|^{-1}\mathbf{v}$ can be implemented with the following three steps.

\begin{enumerate}[1.]
    \item $\textrm{Compute}~\widetilde{\mathbf{v}} = \mathcal{U}^{*}\mathbf{\mathbf{v}}$,
    \item $\textrm{Compute}~\widetilde{\mathbf{w}} = \mat{\sqrt{|\Lambda|^2+\alpha^2 {I}_n \otimes I_m}^{-1}&\\&\sqrt{|\Lambda|^2+\alpha^2 {I}_n \otimes I_m}^{-1}}\widetilde{\mathbf{v}}$,
    \item $\textrm{Compute}~\mathbf{w} = \mathcal{U}\widetilde{\mathbf{w}}$.
\end{enumerate}

When the spatial grid is uniformly partitioned, the orthogonal matrix $\mathbb{U}_{m}$ becomes the discrete sine matrix $\mathbb{S}_{m}$. In this case, Step 1 and 3 can be computed efficiently by fast Fourier transform and fast sine transform in $\mathcal{O}(mn\log{n})$ operations. For step 2, the required computations take $\mathcal{O}(mn)$ operations since the matrix involved is a simple diagonal matrix.

The product of $\mathcal{P}_{MS}^{-1}\mathbf{v}$ for any vector $\mathbf{v}$ can be implemented following the above procedures.
Note that 
\begin{align*}
&\mathcal{P}_{MS} \\
&= \mat{\sqrt{S_{n}^* S_n + \alpha^2 I_{n}} \otimes I_{m} + \tau I_n \otimes (-L_m)
& \\
& \sqrt{S_n S_{n}^* +  \alpha^2 I_{n}} \otimes I_{m} + \tau  I_n \otimes (-L_m)}\\
&=
\widetilde{\mathcal{U}}\mat{\sqrt{|\Lambda_{n}|^2 + \alpha^2 I_{n}}\otimes I_{m} + \tau I_n \otimes (-L_m) &\\
&\sqrt{|\Lambda_{n}|^2 + \alpha^2 I_{n}} + \tau  I_n \otimes (-L_m)}\widetilde{\mathcal{U}}^{*},
\end{align*}
where $\widetilde{\mathcal{U}}=\mat{\Gamma_n\mathbb{F}_{n}\otimes I_{m}&\\&(\Gamma_n\mathbb{F}_{n}\otimes I_m)^{*}}$ is an unitary matrix.

The computation of $\mathcal{P}_{MS}^{-1}\mathbf{v}$ can be implemented by the following three steps.

\begin{enumerate}[1.]
    \item $\textrm{Compute}~\widetilde{\mathbf{v}} = \widetilde{\mathcal{U}}^{*}\mathbf{\mathbf{v}}$,
    \item $\textrm{Compute}$
    \begin{eqnarray*}
    &&\widetilde{\mathbf{w}} \\
    &=& \mat{(\sqrt{|\Lambda_{n}|^2 + \alpha^2 I_{n}}\otimes I_{m} + \tau  I_n\otimes (-L_m))^{-1} &\\
&(\sqrt{|\Lambda_{n}|^2 + \alpha^2 I_{n}}+ \tau  I_n\otimes (-L_m))^{-1}}\widetilde{\mathbf{v}},
    \end{eqnarray*}
    \item $\textrm{Compute}~\mathbf{w} = \widetilde{\mathcal{U}}\widetilde{\mathbf{w}}$.
\end{enumerate}
Both Steps 1 and 3 can be computed by fast Fourier transformation in $\mathcal{O}(mn\log{n})$. As for Step 2, again, the shifted Laplacian systems can be efficiently solved by the multigrid method. We refer to \cite{WuZhou2020} for more details regarding such efficient implementation.

\section{Numerical examples}\label{sec:numerical}
In this section, we provide several numerical results to show the performance of our proposed preconditioners. All numerical experiments are carried out using MATLAB 2022b on a PC with Intel i5-13600KF CPU 3.50GHz and 32 GB RAM. 

The CPU time in seconds is measured using MATLAB built-in functions $\bold{tic \backslash toc}$. All Steps $1 - 3$ in Section \ref{sub:implementation} are implemented by the functions $\bold{dst}$ and $\bold{fft}$ as discrete sine transform and fast Fourier transform respectively. All Krylov subspace solvers used are implemented using the built-in functions on MATLAB. We choose a zero initial guess and a stopping tolerance of $10^{-8}$ based on the reduction in relative residual norms for all Krylov subspace solvers tested unless otherwise indicated.

We adopt the notation MINRES-$|\mathcal{P}_{S}|$ and MINRES-$\mathcal{P}_{MS}$ to represent the MINRES solvers with $|\mathcal{P}_{S}|$ and $\mathcal{P}_{MS}$, respectively. Also, GMRES-$\mathcal{P}_{S}$ is used to represent the GMRES solver with the proposed preconditioner $\mathcal{P}_{S}$. We compare our proposed methods against the state-of-the-art solver proposed recently in \cite{Linheatopt2022} (denoted by PCG-$\mathcal{P}_{\epsilon}$), where an $\epsilon$-circulant preconditioner $\mathcal{P}_{\epsilon}$ was constructed. Note that we did not compare with the matching Schur complement preconditioners proposed in \cite{PearsonWathen2012,LevequePearson22}. It is expected that their effectiveness cannot surpass PCG-$\mathcal{P}_{\epsilon}$ as studied in the numerical tests carried out in \cite{Linheatopt2022}. 

In the related tables, we denote by 'Iter' the number of iterations for solving a linear system by an iterative solver within the given accuracy. Denote by 'DoF', the number of unknowns in a linear system. Let $p^*$ and $y^*$ denote the approximate solution to $p$ and $y$, respectively. Then, we define the error measure $e_h$ as
\begin{equation}\label{eqn:error_measure}
e_h = \left\| \begin{bmatrix} y^*\\ p^* \end{bmatrix} - \begin{bmatrix} y\\ p \end{bmatrix} \right\|_{L^{\infty}_{\tau}(L^{2}(\Omega))}.
\end{equation}
The time interval $[0,T]$ and the space are partitioned uniformly with the mesh step size $\tau=T/n=T/{h^{-1}}$ and $h=1/(m+1)$, respectively, where $h$ can be found in the related tables. Also, only $\zeta=\pi$ is used in the related tables for $\omega = e^{\textbf{i}\zeta}$ in the block $\omega$-circulant preconditioners. It is because, after conducting extensive trials, it was consistently observed that the preconditioner corresponding to $\zeta=\pi$ yielded the best results.

\begin{example}\label{ex:example2}
    In this example \cite{Linheatopt2022}, we consider the following two-dimensional problem of solving (\ref{eqn:Cost_functional_heat}), where $\Omega=(0,1)^2$, $T = 1$, $a(x_1,x_2)=1$, and
    \begin{align*}
        f(x_1,x_2,t)&=(2\pi^2 -1)e^{-t}\sin{(\pi x_1)}\sin{(\pi x_2)},\\
        g(x_1,x_2,t)&=e^{-t}\sin{(\pi x_1)}\sin{(\pi x_2)},
    \end{align*}
    The analytical solution of which is given by
    \begin{eqnarray*}
        y(x_1,x_2,t)&=e^{-t}\sin{(\pi x_1)}\sin{(\pi x_2)},\quad p=0.
    \end{eqnarray*}
\end{example}

To support the result of Theorem \ref{theorem:P_AuxS}, we display the eigenvalues of the matrix $\mathcal{P}_{MS}^{-1}|\mathcal{P}_{S}|$ for various values of $\gamma$ in Figure \ref{fig:invPMS_PS}. The illustration confirms that the eigenvalues consistently fall within the interval $[\frac{1}{\sqrt{2}}, 1]$, aligning with the expectations set forth in Remark \ref{remark:P_AuxS}. Furthermore, it is evident that as $\gamma$ diminishes, the eigenvalues of $\mathcal{P}_{MS}^{-1}|\mathcal{P}_{S}|$ exhibit increased clustering around one. This trend can be attributed to the fact that $\alpha = \frac{\tau}{\sqrt{\gamma}}$ grows larger when $\gamma$ is reduced, assuming the matrix size (or $\tau$) remains constant. As $\alpha$ becomes larger, it follows from their respective definitions that $\mathcal{P}_{MS}$ becomes closer to $|\mathcal{P}_{S}|$.

\begin{figure}
     \centering
     \begin{subfigure}[b]{0.48\textwidth}
         \centering
         \includegraphics[width=\textwidth]{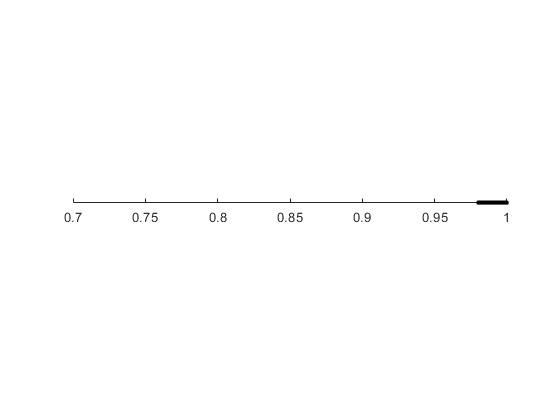}
         \caption{$\gamma=10^{-10}$}
     \end{subfigure}
     \hfill
     \begin{subfigure}[b]{0.48\textwidth}
         \centering
         \includegraphics[width=\textwidth]{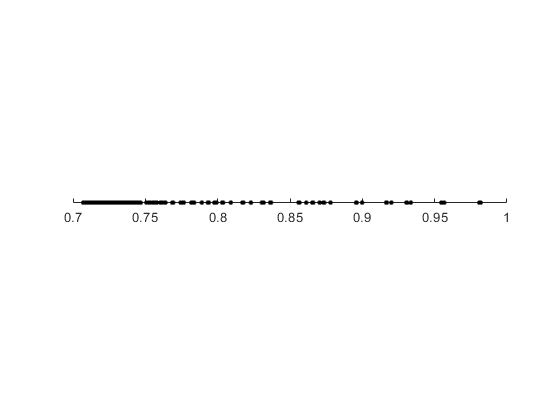}
         \caption{$\gamma=10^{-6}$}
     \end{subfigure}\\
     \begin{subfigure}[b]{0.48\textwidth}
         \centering
         \includegraphics[width=\textwidth]{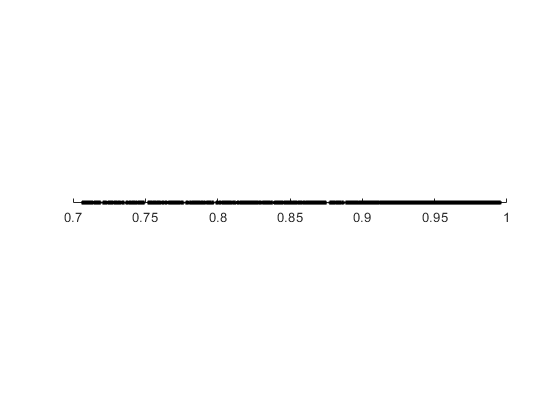}
         \caption{$\gamma=10^{-2}$}
     \end{subfigure}
 
        \caption{Eigenvalues of $\mathcal{P}_{MS}^{-1}|\mathcal{P}_{S}|$ with $n=16$, $m=15$, and various $\gamma$.}
        \label{fig:invPMS_PS}
\end{figure}

Table \ref{tab:table2_minres} displays the iteration counts, CPU times, and errors for GMRES-$\mathcal{P}_S$ and MINRES-$|\mathcal{P}_S|$ when applying the Crank-Nicolson method with different values of $\gamma$. Note that MINRES-$\mathcal{P}_{MS}$ was not implemented for this example, as $|\mathcal{P}_S|$ can already be efficiently implemented using fast sine transforms. We observe that: (i) both GMRES-$\mathcal{P}_S$ and MINRES-$|\mathcal{P}_S|$ with $\zeta=\pi$ perform excellently and stably, considering both iteration counts and CPU times across various values of $\gamma$; and (ii) the error decreases as the mesh is refined, except in the case when $\gamma = 10^{-10}$. In such an instance, the error exhibits only a slight decrease as the matrix size grows, which is likely due to the convergence tolerance used for MINRES not being sufficiently small to demonstrate the anticipated reduction. 

In Table \ref{tab:table2_pcgalpha}, we compare our preconditioners against PCG-$\mathcal{P}_{\epsilon}$ from \cite{Linheatopt2022} with $\epsilon = \frac{1}{2} \min \{\frac{\tau}{24\sqrt{\gamma}}, \frac{\tau^{\frac{3}{2}}}{2\sqrt{6\gamma}T}, \frac{\tau^2}{8\sqrt{3\gamma}T}, \frac{1}{3}\}$, where only the Crank-Nicolson method is considered. We report that for a larger value of $\gamma \geq 10^{-6}$, our proposed GMRES-$\mathcal{P}_{S}$ with $\omega=-1$ outperforms PCG-$\mathcal{P}_{\epsilon}$ significantly, namely, the computational time PCG-$\mathcal{P}_{\epsilon}$ needed for convergence is roughly two times larger. When $\gamma$ is small, GMRES-$\mathcal{P}_{S}$ is still highly comparable with PCG-$\mathcal{P}_{\epsilon}$ in terms of CPU times. Overall, GMRES-$\mathcal{P}_{S}$ is stable and robust in both iteration numbers and computational time for a wide range of $\gamma$.

\begin{example}\label{ex:example3}
    In this example, we consider the following two-dimensional problem of solving (\ref{eqn:Cost_functional_heat}) with a variable function $a(x_1,x_2)$, where $\Omega=(0,1)^2$, $T=1$, $a(x_1,x_2)=10^{-5} \sin{(\pi x_1 x_2)}$, and
    \begin{multline*}
        f(x_1,x_2,t)= - \sin{(\pi t)}\sin{(\pi x_1)}\sin{(\pi x_2)} + e^{-t} x_1 (1-x_1) [2\times 10^{-5}\sin{(\pi x_1 x_2)}\\
        - x_2 (1-x_2) - 10^{-5} \pi \cos{(\pi x_1 x_2)} x (1-2x_2)]\\
        +e^{-t} x_2 (1-x_2) [2\times 10^{-5} \sin{(\pi x_1 x_2)} - 10^{-5} \pi \cos{(\pi x_1 x_2)} x_2 (1-2x_1)],
    \end{multline*}
    \begin{multline*}
        g(x_1,x_2,t)=-\gamma \pi \cos{(\pi t)}\sin{(\pi x_1)}\sin{(\pi x_2)} + e^{-t} x_1 (1-x_1) x_2 (1-x_2)\\
        - 10^{-5}\gamma \pi^2 \sin{(\pi t)}[-2\sin{(\pi x_1 x_2)}\sin{(\pi x_1)}\sin{(\pi x_2)} \\
        + \cos{(\pi x_1 x_2)}(x_1\sin{(\pi x_1)}\cos{(\pi x_2)}+x_2\cos{(\pi x_1)}\sin{(\pi x_2)})].
    \end{multline*}
    The analytical solution of which is given by
    \begin{eqnarray*}
        y(x_1,x_2,t)=e^{-t} x_1 (1-x_1) x_2 (1-x_2),\quad p(x_1,x_2,t)=\gamma\sin{(\pi t)}\sin{(\pi x_1)}\sin{(\pi x_2)}.
    \end{eqnarray*}
\end{example}

In the given example, the direct application of MINRES-$|\mathcal{P}_{S}|$ is not feasible due to the non-diagonalizability of $-L_m$ using fast transform methods. Consequently, we adopt MINRES-$\mathcal{P}_{MS}$ which incorporates a multigrid method. Specifically, to solve the shifted Laplacian linear system (as detailed in Subsection \ref{sub:implementation}) and compute $\mathcal{P}_{MS}^{-1}\mathbf{v}$ for any vector $\mathbf{v}$, we apply one iteration of the V-cycle geometric multigrid method. In this iteration, the Gauss-Seidel method is employed as the pre-smoother.

Table \ref{tab:table3} shows the iteration numbers, CPU time, and error of GMRES-$\mathcal{P}_{S}$ and MINRES-$\mathcal{P}_{MS}$, respectively, when the Crank-Nicolson method is applied with a range of $\gamma$. This example aims to investigate the effectiveness of our solvers when $a(x_1,x_2)$ in (\ref{eqn:heat_2}) is not a constant. The results show that (i) GMRES-$\mathcal{P}_{S}$ with $\omega=-1$ maintains relatively stable iteration numbers and CPU time across a wide range of $\gamma$; (ii) MINRES-$\mathcal{P}_{MS}$ performs well for small $\gamma$, but its efficiency decreases as $\gamma$ increases — a phenomenon that has also been observed and reported in a previous study \cite{hondongSC2023}. This behavior may be attributed to the eigenvalue distribution of $\mathcal{A}$. Specifically, when $\gamma$ is very small compared to $\tau$, it is plausible to assume that $\alpha = \frac{\tau}{\sqrt{\gamma}}$ is quite large. Indeed, as $\gamma$ approaches $0^+$, it is corroborated by \cite[Corollary 3.2]{hondongSC2023} that the matrix-sequence $\left\{{\mathcal{A}\over \alpha} \right\}_n$ has eigenvalues relatively clustered around $\pm 1$, thus facilitating the solving of the all-at-once system. Additionally, as discussed in the preceding example, the increasing size of $\alpha$ results in $\mathcal{P}_{MS}$ closely resembling $|\mathcal{P}_{S}|$, which in turn leads to an improved preconditioning effect.

\begin{example}\label{ex:example4}
This example aims to test the robustness of our proposed method with the (homogeneous) Neumann boundary condition. We consider the following two-dimensional problem, where $\Omega=(0,1)^2$, $T = 1$, $a(x_1,x_2)=10^{-3}$, and
\begin{align*}
     f(x_1,x_2,t)&=(10^{-3} \times 8\pi^2 -1)e^{-t}\cos{(2\pi x_1)}\cos{(2\pi x_2)},\\
     g(x_1,x_2,t)&=e^{-t}\cos{(2\pi x_1)}\cos{(2\pi x_2)},
\end{align*}
    The analytical solution of which is given by
\begin{eqnarray*}
    y(x_1,x_2,t)&=e^{-t}\cos{(2\pi x_1)}\cos{(2\pi x_2)},\quad p=0.
\end{eqnarray*}
\end{example} Note that we use one iteration of the V-cycle geometric multigrid method with the Gauss-Seidel method as a pre-smoother to solve the shifted Laplacian linear system for GMRES-$\mathcal{P}_{S}$. Also, MINRES is not applicable in this case with the Neumann boundary condition, since $\mathcal{A}$ is not symmetric. Thus, we resort to using GMRES-$\mathcal{P}_{S}$.

Table \ref{tab:table4} shows the iteration numbers, CPU time, and error of GMRES-$\mathcal{P}_{S}$ when the Crank-Nicolson method is applied with various values of $\gamma$. The results indicate that GMRES-$\mathcal{P}_{S}$ maintains stable and low iteration numbers across a wide range of $\gamma$.

\section{Conclusions}\label{sec:conclusion}
In this work, we have provided a unifying preconditioning framework for circulant-based preconditioning applied to the concerned parabolic control problem. The framework is applicable for both first order (i.e., $\theta=1$) and second order (i.e., $\theta=1/2$) time discretization schemes. Moreover, it encompasses both circulant (i.e., $\omega=1$) and skew-circulant (i.e., $\omega=-1$ and $\theta=1$) preconditioners previously proposed in existing works. We note that it appears feasible to extend our proposed preconditioning theory to various implicit time-discretization methods, such as Backward Difference Formulas, as long as the block Toeplitz structure within the resulting all-at-once linear system remains intact. When considering more general discretization schemes, such as (semi-)implicit Runge-Kutta methods, a good starting point could be the study recently introduced in \cite{LevequeBergamaschi_etc_23}. This work develops a robust preconditioning approach designed specifically for all-at-once linear systems that are derived from the Runge-Kutta time discretization of time-dependent PDEs.

Specifically, we have proposed a class of block $\omega$-circulant based preconditioners for the all-at-once system of the parabolic control problem. First, when GMRES is considered, we have proposed a PinT preconditioner $\mathcal{P}_S$ for the concerned system. Second, when MINRES is used for the symmetrized system, we have constructed an ideal preconditioner $| \mathcal{A} |$, which can be used as a prototype for designing efficient preconditioners based on $| \mathcal{A} |$. Then, we have designed two novel preconditioners $|\mathcal{P}_{S}|$ and $\mathcal{P}_{MS}$ for the same problem, which can be efficiently implemented in a PinT manner. All proposed preconditioners have been shown effective in both numerical tests and a theoretical study. Based on our numerical tests, it has been demonstrated that our proposed solver, GMRES-$\mathcal{P}_S$ with $\omega=-1$ and $\theta=1/2$, can achieve rapid convergence, consistently maintaining stable iteration counts across a wide range of $\gamma$ values.   

We stress that the development of our proposed MINRES approach for optimal control problems is still in its infancy. As future work, we plan at least to develop more efficient preconditioned Krylov subspace solvers by integrating with an $\epsilon$-circulant matrix, where a small $\epsilon >0$ is chosen. In recent years, this approach has been shown successful for solving various PDEs (see, e.g, \cite{doi:10.1137/20M1316354, doi:10.1137/19M1309869, Sun2022, 2020arXiv200509158G}), achieving clustered singular values without any outliers. We will investigate whether such a combination can reduce the number of singular values/eigenvalue outliers that are present as a result from our preconditioners, which could achieve parameter-independent convergence in the MINRES framework.



\section*{Acknowledgments}
The work of Sean Hon was supported in part by the Hong Kong RGC under grant 22300921 and a start-up grant from the Croucher Foundation.

\begin{table}[!tbp]
\caption{Results of GMRES-$\mathcal{P}_{S}$ and MINRES-$|\mathcal{P}_{S}|$ for Example \ref{ex:example2} with $\zeta=\pi$ and $\theta=\frac{1}{2}$ (Crank-Nicolson)}
\label{tab:table2_minres}
\centering
\begin{tabular}{@{}clclclcccccc@{}}
\toprule
\multicolumn{2}{c}{\multirow{2}{*}{$\gamma$}} & \multicolumn{2}{c}{\multirow{2}{*}{$h$}} & \multicolumn{2}{c}{\multirow{2}{*}{DoF}} & \multicolumn{3}{c}{GMRES-$\mathcal{P}_{S}$} & \multicolumn{3}{c}{MINRES-$|\mathcal{P}_{S}|$} \\ \cmidrule(l){7-12} 
\multicolumn{2}{c}{} & \multicolumn{2}{c}{} & \multicolumn{2}{c}{} & Iter & CPU & $e_{h}$ & Iter & CPU & $e_{h}$ \\ \midrule
\multicolumn{2}{c}{\multirow{4}{*}{$10^{-10}$}} & \multicolumn{2}{c}{$2^{-5}$} & \multicolumn{2}{c}{61504} & 3 & 0.039 & 1.18e-9 & 3 & 0.033 & 3.18e-9 \\
\multicolumn{2}{c}{} & \multicolumn{2}{c}{$2^{-6}$} & \multicolumn{2}{c}{508032} & 3 & 0.35 & 1.18e-9 & 5 & 0.48 & 1.45e-9 \\
\multicolumn{2}{c}{} & \multicolumn{2}{c}{$2^{-7}$} & \multicolumn{2}{c}{4129024} & 3 & 3.32 & 1.04e-9 & 6 & 4.91 & 1.04e-9 \\
\multicolumn{2}{c}{} & \multicolumn{2}{c}{$2^{-8}$} & \multicolumn{2}{c}{33292800} & 3 & 27.96 & 4.49e-10 & 6 & 40.91 & 4.49e-10 \\ \midrule
\multicolumn{2}{c}{\multirow{4}{*}{$10^{-8}$}} & \multicolumn{2}{c}{$2^{-5}$} & \multicolumn{2}{c}{61504} & 3 & 0.030 & 1.12e-7 & 6 & 0.045 & 1.26e-7 \\
\multicolumn{2}{c}{} & \multicolumn{2}{c}{$2^{-6}$} & \multicolumn{2}{c}{508032} & 3 & 0.37 & 6.71e-8 & 6 & 0.52 & 6.71e-8 \\
\multicolumn{2}{c}{} & \multicolumn{2}{c}{$2^{-7}$} & \multicolumn{2}{c}{4129024} & 3 & 3.30 & 1.81e-8 & 6 & 4.90 & 1.81e-8 \\
\multicolumn{2}{c}{} & \multicolumn{2}{c}{$2^{-8}$} & \multicolumn{2}{c}{33292800} & 3 & 27.85 & 4.53e-9 & 6 & 40.83 & 4.53e-9 \\ \midrule
\multicolumn{2}{c}{\multirow{4}{*}{$10^{-6}$}} & \multicolumn{2}{c}{$2^{-5}$} & \multicolumn{2}{c}{61504} & 3 & 0.029 & 2.90e-6 & 6 & 0.044 & 2.90e-6 \\
\multicolumn{2}{c}{} & \multicolumn{2}{c}{$2^{-6}$} & \multicolumn{2}{c}{508032} & 3 & 0.34 & 7.26e-7 & 6 & 0.56 & 7.26e-7 \\
\multicolumn{2}{c}{} & \multicolumn{2}{c}{$2^{-7}$} & \multicolumn{2}{c}{4129024} & 3 & 3.35 & 1.81e-7 & 6 & 4.91 & 1.81e-7 \\
\multicolumn{2}{c}{} & \multicolumn{2}{c}{$2^{-8}$} & \multicolumn{2}{c}{33292800} & 3 & 27.76 & 4.54e-8 & 6 & 40.98 & 4.54e-8 \\ \midrule
\multicolumn{2}{c}{\multirow{4}{*}{$10^{-4}$}} & \multicolumn{2}{c}{$2^{-5}$} & \multicolumn{2}{c}{61504} & 3 & 0.029 & 2.87e-5 & 6 & 0.045 & 2.87e-5 \\
\multicolumn{2}{c}{} & \multicolumn{2}{c}{$2^{-6}$} & \multicolumn{2}{c}{508032} & 3 & 0.39 & 7.19e-6 & 6 & 0.51 & 7.19e-6 \\
\multicolumn{2}{c}{} & \multicolumn{2}{c}{$2^{-7}$} & \multicolumn{2}{c}{4129024} & 3 & 3.34 & 1.80e-6 & 6 & 4.91 & 1.80e-6 \\
\multicolumn{2}{c}{} & \multicolumn{2}{c}{$2^{-8}$} & \multicolumn{2}{c}{33292800} & 3 & 27.96 & 4.49e-7 & 6 & 40.84 & 4.49e-7 \\ \midrule
\multicolumn{2}{c}{\multirow{4}{*}{$10^{-2}$}} & \multicolumn{2}{c}{$2^{-5}$} & \multicolumn{2}{c}{61504} & 3 & 0.030 & 2.77e-4 & 6 & 0.046 & 2.77e-4 \\
\multicolumn{2}{c}{} & \multicolumn{2}{c}{$2^{-6}$} & \multicolumn{2}{c}{508032} & 3 & 0.37 & 6.91e-5 & 6 & 0.55 & 6.91e-5 \\
\multicolumn{2}{c}{} & \multicolumn{2}{c}{$2^{-7}$} & \multicolumn{2}{c}{4129024} & 3 & 3.35 & 1.73e-5 & 6 & 4.90 & 1.73e-5 \\
\multicolumn{2}{c}{} & \multicolumn{2}{c}{$2^{-8}$} & \multicolumn{2}{c}{33292800} & 3 & 27.80 & 4.31e-6 & 6 & 40.96 & 4.31e-6 \\ \bottomrule
\end{tabular}
\end{table}

\begin{table}[!tbp]
\caption{Results of PCG-$\mathcal{P}_{\epsilon}$ for Example \ref{ex:example2} with Crank-Nicolson method ($\theta=\frac{1}{2}$)}
\label{tab:table2_pcgalpha}
\centering
\begin{tabular}{@{}clclclccc@{}}
\toprule
\multicolumn{2}{c}{\multirow{2}{*}{$\gamma$}} & \multicolumn{2}{c}{\multirow{2}{*}{$h$}} & \multicolumn{2}{c}{\multirow{2}{*}{DoF}} & \multicolumn{3}{c}{PCG-$\mathcal{P}_{\epsilon}$} \\ \cmidrule(l){7-9} 
\multicolumn{2}{c}{} & \multicolumn{2}{c}{} & \multicolumn{2}{c}{} & Iter & CPU & $e_{h}$ \\ \midrule
\multicolumn{2}{c}{\multirow{4}{*}{$10^{-10}$}} & \multicolumn{2}{c}{$2^{-5}$} & \multicolumn{2}{c}{61504} & 3 & 0.084 & 1.18e-9 \\
\multicolumn{2}{c}{} & \multicolumn{2}{c}{$2^{-6}$} & \multicolumn{2}{c}{508032} & 3 & 0.23 & 1.18e-9 \\
\multicolumn{2}{c}{} & \multicolumn{2}{c}{$2^{-7}$} & \multicolumn{2}{c}{4129024} & 4 & 2.49 & 1.04e-9 \\
\multicolumn{2}{c}{} & \multicolumn{2}{c}{$2^{-8}$} & \multicolumn{2}{c}{33292800} & 5 & 24.55 & 4.49e-10 \\ \midrule
\multicolumn{2}{c}{\multirow{4}{*}{$10^{-8}$}} & \multicolumn{2}{c}{$2^{-5}$} & \multicolumn{2}{c}{61504} & 4 & 0.023 & 1.12e-7 \\
\multicolumn{2}{c}{} & \multicolumn{2}{c}{$2^{-6}$} & \multicolumn{2}{c}{508032} & 5 & 0.34 & 6.71e-8 \\
\multicolumn{2}{c}{} & \multicolumn{2}{c}{$2^{-7}$} & \multicolumn{2}{c}{4129024} & 5 & 3.01 & 1.81e-8 \\
\multicolumn{2}{c}{} & \multicolumn{2}{c}{$2^{-8}$} & \multicolumn{2}{c}{33292800} & 5 & 24.70 & 4.53e-9 \\ \midrule
\multicolumn{2}{c}{\multirow{4}{*}{$10^{-6}$}} & \multicolumn{2}{c}{$2^{-5}$} & \multicolumn{2}{c}{61504} & 5 & 0.046 & 2.90e-6 \\
\multicolumn{2}{c}{} & \multicolumn{2}{c}{$2^{-6}$} & \multicolumn{2}{c}{508032} & 5 & 0.33 & 7.26e-7 \\
\multicolumn{2}{c}{} & \multicolumn{2}{c}{$2^{-7}$} & \multicolumn{2}{c}{4129024} & 6 & 3.50 & 1.81e-7 \\
\multicolumn{2}{c}{} & \multicolumn{2}{c}{$2^{-8}$} & \multicolumn{2}{c}{33292800} & 6 & 28.64 & 4.54e-8 \\ \midrule
\multicolumn{2}{c}{\multirow{4}{*}{$10^{-4}$}} & \multicolumn{2}{c}{$2^{-5}$} & \multicolumn{2}{c}{61504} & 9 & 0.063 & 2.87e-5 \\
\multicolumn{2}{c}{} & \multicolumn{2}{c}{$2^{-6}$} & \multicolumn{2}{c}{508032} & 9 & 0.57 & 7.19e-6 \\
\multicolumn{2}{c}{} & \multicolumn{2}{c}{$2^{-7}$} & \multicolumn{2}{c}{4129024} & 9 & 4.95 & 1.80e-6 \\
\multicolumn{2}{c}{} & \multicolumn{2}{c}{$2^{-8}$} & \multicolumn{2}{c}{33292800} & 10 & 44.46 & 4.49e-7 \\ \midrule
\multicolumn{2}{c}{\multirow{4}{*}{$10^{-2}$}} & \multicolumn{2}{c}{$2^{-5}$} & \multicolumn{2}{c}{61504} & 11 & 0.061 & 2.77e-4 \\
\multicolumn{2}{c}{} & \multicolumn{2}{c}{$2^{-6}$} & \multicolumn{2}{c}{508032} & 11 & 0.68 & 6.91e-5 \\
\multicolumn{2}{c}{} & \multicolumn{2}{c}{$2^{-7}$} & \multicolumn{2}{c}{4129024} & 11 & 5.95 & 1.73e-5 \\
\multicolumn{2}{c}{} & \multicolumn{2}{c}{$2^{-8}$} & \multicolumn{2}{c}{33292800} & 12 & 53.96 & 4.31e-6 \\ \bottomrule
\end{tabular}
\end{table}

\begin{table}[!tbp]
\caption{Results of GMRES-$\mathcal{P}_{S}$ and MINRES-$\mathcal{P}_{MS}$ for Example \ref{ex:example3} with $\zeta=\pi$ and $\theta=\frac{1}{2}$ (Crank-Nicolson)}
\label{tab:table3}
\centering
\begin{tabular}{@{}clclclcccccc@{}}
\toprule
\multicolumn{2}{c}{\multirow{2}{*}{$\gamma$}} & \multicolumn{2}{c}{\multirow{2}{*}{$h$}} & \multicolumn{2}{c}{\multirow{2}{*}{DoF}} & \multicolumn{3}{c}{GMRES-$\mathcal{P}_{S}$} & \multicolumn{3}{c}{MINRES-$\mathcal{P}_{MS}$} \\ \cmidrule(l){7-12} 
\multicolumn{2}{c}{} & \multicolumn{2}{c}{} & \multicolumn{2}{c}{} & Iter & CPU & $e_{h}$ & Iter & CPU & $e_{h}$ \\ \midrule
\multicolumn{2}{c}{\multirow{4}{*}{$10^{-10}$}} & \multicolumn{2}{c}{$2^{-5}$} & \multicolumn{2}{c}{61504} & 3 & 0.14 & 6.60e-13 & 3 & 0.11 & 2.91e-10 \\
\multicolumn{2}{c}{} & \multicolumn{2}{c}{$2^{-6}$} & \multicolumn{2}{c}{508032} & 3 & 0.67 & 4.68e-13 & 5 & 0.74 & 2.55e-11 \\
\multicolumn{2}{c}{} & \multicolumn{2}{c}{$2^{-7}$} & \multicolumn{2}{c}{4129024} & 3 & 5.35 & 7.51e-13 & 6 & 6.27 & 4.43e-13 \\
\multicolumn{2}{c}{} & \multicolumn{2}{c}{$2^{-8}$} & \multicolumn{2}{c}{33292800} & 3 & 48.26 & 8.40e-12 & 6 & 61.63 & 1.73e-11 \\ \midrule
\multicolumn{2}{c}{\multirow{4}{*}{$10^{-8}$}} & \multicolumn{2}{c}{$2^{-5}$} & \multicolumn{2}{c}{61504} & 3 & 0.13 & 6.30e-11 & 6 & 0.17 & 6.29e-11 \\
\multicolumn{2}{c}{} & \multicolumn{2}{c}{$2^{-6}$} & \multicolumn{2}{c}{508032} & 3 & 0.64 & 5.48e-11 & 6 & 0.84 & 5.39e-11 \\
\multicolumn{2}{c}{} & \multicolumn{2}{c}{$2^{-7}$} & \multicolumn{2}{c}{4129024} & 3 & 5.39 & 1.13e-10 & 7 & 7.51 & 7.28e-10 \\
\multicolumn{2}{c}{} & \multicolumn{2}{c}{$2^{-8}$} & \multicolumn{2}{c}{33292800} & 3 & 59.97 & 1.14e-10 & 9 & 107.00 & 3.90e-11 \\ \midrule
\multicolumn{2}{c}{\multirow{4}{*}{$10^{-6}$}} & \multicolumn{2}{c}{$2^{-5}$} & \multicolumn{2}{c}{61504} & 3 & 0.17 & 2.53e-9 & 7 & 0.24 & 2.79e-9 \\
\multicolumn{2}{c}{} & \multicolumn{2}{c}{$2^{-6}$} & \multicolumn{2}{c}{508032} & 3 & 0.81 & 1.26e-9 & 10 & 1.65 & 5.65e-10 \\
\multicolumn{2}{c}{} & \multicolumn{2}{c}{$2^{-7}$} & \multicolumn{2}{c}{4129024} & 3 & 6.71 & 1.14e-9 & 10 & 11.98 & 3.22e-10 \\
\multicolumn{2}{c}{} & \multicolumn{2}{c}{$2^{-8}$} & \multicolumn{2}{c}{33292800} & 3 & 74.15 & 1.14e-9 & 10 & 126.17 & 5.38e-10 \\ \midrule
\multicolumn{2}{c}{\multirow{4}{*}{$10^{-4}$}} & \multicolumn{2}{c}{$2^{-5}$} & \multicolumn{2}{c}{61504} & 5 & 0.25 & 1.53e-7 & 14 & 0.42 & 1.53e-7 \\
\multicolumn{2}{c}{} & \multicolumn{2}{c}{$2^{-6}$} & \multicolumn{2}{c}{508032} & 5 & 1.24 & 3.40e-8 & 15 & 2.57 & 3.40e-8 \\
\multicolumn{2}{c}{} & \multicolumn{2}{c}{$2^{-7}$} & \multicolumn{2}{c}{4129024} & 5 & 10.89 & 8.51e-9 & 18 & 23.00 & 8.51e-9 \\
\multicolumn{2}{c}{} & \multicolumn{2}{c}{$2^{-8}$} & \multicolumn{2}{c}{33292800} & 5 & 100.60 & 2.13e-9 & 23 & 313.30 & 2.13e-9 \\ \midrule
\multicolumn{2}{c}{\multirow{4}{*}{$10^{-2}$}} & \multicolumn{2}{c}{$2^{-5}$} & \multicolumn{2}{c}{61504} & 5 & 0.25 & 1.16e-5 & 20 & 0.57 & 1.16e-5 \\
\multicolumn{2}{c}{} & \multicolumn{2}{c}{$2^{-6}$} & \multicolumn{2}{c}{508032} & 5 & 1.54 & 2.90e-6 & 24 & 4.28 & 2.90e-6 \\
\multicolumn{2}{c}{} & \multicolumn{2}{c}{$2^{-7}$} & \multicolumn{2}{c}{4129024} & 5 & 12.99 & 7.25e-7 & 30 & 39.82 & 7.25e-7 \\
\multicolumn{2}{c}{} & \multicolumn{2}{c}{$2^{-8}$} & \multicolumn{2}{c}{33292800} & 5 & 121.50 & 1.81e-7 & 52 & 739.91 & 1.81e-7 \\ \bottomrule
\end{tabular}
\end{table}

\begin{table}[!tbp]
\caption{Results of GMRES-$\mathcal{P}_{S}$ for Example \ref{ex:example4} with $\zeta=\pi$ and $\theta=\frac{1}{2}$ (Crank-Nicolson)}
\label{tab:table4}
\centering
\begin{tabular}{@{}clclclccc@{}}
\toprule
\multicolumn{2}{c}{\multirow{2}{*}{$\gamma$}} & \multicolumn{2}{c}{\multirow{2}{*}{$h$}} & \multicolumn{2}{c}{\multirow{2}{*}{DoF}} & \multicolumn{3}{c}{GMRES-$\mathcal{P}_{S}$} \\ \cmidrule(l){7-9} 
\multicolumn{2}{c}{} & \multicolumn{2}{c}{} & \multicolumn{2}{c}{} & Iter & CPU & $e_{h}$ \\ \midrule
\multicolumn{2}{c}{\multirow{4}{*}{$10^{-10}$}} & \multicolumn{2}{c}{$2^{-5}$} & \multicolumn{2}{c}{65536} & 3 & 0.23 & 1.51e-11 \\
\multicolumn{2}{c}{} & \multicolumn{2}{c}{$2^{-6}$} & \multicolumn{2}{c}{524288} & 3 & 1.02 & 1.39e-11 \\
\multicolumn{2}{c}{} & \multicolumn{2}{c}{$2^{-7}$} & \multicolumn{2}{c}{4194304} & 3 & 6.98 & 1.18e-11 \\
\multicolumn{2}{c}{} & \multicolumn{2}{c}{$2^{-8}$} & \multicolumn{2}{c}{33554432} & 3 & 81.18 & 4.99e-12 \\ \midrule
\multicolumn{2}{c}{\multirow{4}{*}{$10^{-8}$}} & \multicolumn{2}{c}{$2^{-5}$} & \multicolumn{2}{c}{65536} & 3 & 0.18 & 1.43e-9 \\
\multicolumn{2}{c}{} & \multicolumn{2}{c}{$2^{-6}$} & \multicolumn{2}{c}{524288} & 3 & 1.07 & 7.93e-10 \\
\multicolumn{2}{c}{} & \multicolumn{2}{c}{$2^{-7}$} & \multicolumn{2}{c}{4194304} & 3 & 8.37 & 2.06e-10 \\
\multicolumn{2}{c}{} & \multicolumn{2}{c}{$2^{-8}$} & \multicolumn{2}{c}{33554432} & 3 & 94.33 & 5.05e-11 \\ \midrule
\multicolumn{2}{c}{\multirow{4}{*}{$10^{-6}$}} & \multicolumn{2}{c}{$2^{-5}$} & \multicolumn{2}{c}{65536} & 3 & 0.22 & 3.69e-8 \\
\multicolumn{2}{c}{} & \multicolumn{2}{c}{$2^{-6}$} & \multicolumn{2}{c}{524288} & 3 & 1.26 & 8.56e-9 \\
\multicolumn{2}{c}{} & \multicolumn{2}{c}{$2^{-7}$} & \multicolumn{2}{c}{4194304} & 3 & 12.36 & 2.06e-9 \\
\multicolumn{2}{c}{} & \multicolumn{2}{c}{$2^{-8}$} & \multicolumn{2}{c}{33554432} & 3 & 152.13 & 5.05e-10 \\ \midrule
\multicolumn{2}{c}{\multirow{4}{*}{$10^{-4}$}} & \multicolumn{2}{c}{$2^{-5}$} & \multicolumn{2}{c}{65536} & 3 & 0.27 & 3.73e-7 \\
\multicolumn{2}{c}{} & \multicolumn{2}{c}{$2^{-6}$} & \multicolumn{2}{c}{524288} & 3 & 1.70 & 8.64e-8 \\
\multicolumn{2}{c}{} & \multicolumn{2}{c}{$2^{-7}$} & \multicolumn{2}{c}{4194304} & 3 & 18.68 & 2.08e-8 \\
\multicolumn{2}{c}{} & \multicolumn{2}{c}{$2^{-8}$} & \multicolumn{2}{c}{33554432} & 3 & 249.61 & 5.10e-9 \\ \midrule
\multicolumn{2}{c}{\multirow{4}{*}{$10^{-2}$}} & \multicolumn{2}{c}{$2^{-5}$} & \multicolumn{2}{c}{65536} & 3 & 0.35 & 4.10e-6 \\
\multicolumn{2}{c}{} & \multicolumn{2}{c}{$2^{-6}$} & \multicolumn{2}{c}{524288} & 3 & 2.28 & 9.51e-7 \\
\multicolumn{2}{c}{} & \multicolumn{2}{c}{$2^{-7}$} & \multicolumn{2}{c}{4194304} & 3 & 22.44 & 2.29e-7 \\
\multicolumn{2}{c}{} & \multicolumn{2}{c}{$2^{-8}$} & \multicolumn{2}{c}{33554432} & 3 & 282.79 & 5.61e-8 \\ \bottomrule
\end{tabular}
\end{table}

\end{document}